\tikzset{>=latex}
\newtheorem{theorem}{Theorem}[section]
\newtheorem{lemma}[theorem]{Lemma}
\theoremstyle{remark}
\numberwithin{equation}{section}
\newcommand{\elem}{\ensuremath{T}}
\newcommand{\mesh}{\ensuremath{\mathcal T}}
\newcommand{\faceSet}{\ensuremath{\mathcal F}}
\newcommand{\faceSetDir}{\ensuremath{\mathcal F^\textup D}}
\newcommand{\faceNum}{\#\faceSet^\elem}
\newcommand{\face}{\ensuremath{F}}
\newcommand{\skeletalSpace}{\ensuremath{M}}
\newcommand{\contElementSpace}{\ensuremath{V^\textup c}}
\newcommand{\linElementSpace}{\ensuremath{\overline V^\textup c}}
\newcommand{\discElementSpace}{\ensuremath{V}}
\newcommand{\polynomials}{\ensuremath{\mathcal P}}
\newcommand{\level}{\ensuremath{\ell}}
\newcommand{\iterMgOuter}{i}
\newcommand{\iterMgInner}{m}
\newcommand{\Div}{\nabla\!\cdot\!}
\newcommand{\extensionOp}{\ensuremath{\mathcal U^\textup c}}
\newcommand{\averagingOp}{\ensuremath{I^\textup{avg}}}
\newcommand{\traceOp}{\ensuremath{\gamma_\level}}
\newcommand{\injectionOp}{\ensuremath{I}}
\newcommand{\projectionOp}{\ensuremath{P}}
\newcommand{\projectionOrthogonalOP}{\ensuremath{\Pi}}
\newcommand{\projectionLinOp}{\ensuremath{\overline P}}
\newcommand{\tildelambda}{\tilde\lambda}
\newcommand{\ureconstructed}{\overline{u}}
\renewcommand{\vec}[1]{\ensuremath{\boldsymbol{#1}}}
\newcommand{\Nu}{\ensuremath{\vec \nu}}
\newcommand{\dx}{\ensuremath{\, \textup d x}}
\newcommand{\ds}{\ensuremath{\, \textup d \sigma}}
\newcommand{\localU}{\ensuremath{\mathcal U}}
\newcommand{\localQ}{\ensuremath{\vec{\mathcal Q}}}
\newcommand{\llangle}{\ensuremath{\langle \! \langle}}
\newcommand{\rrangle}{\ensuremath{\rangle \! \rangle}}
\newcommand{\nnorm}{\ensuremath{\vert \! \vert \! \vert}}
\begin{document}

\title{HMG --- Homogeneous multigrid for HDG} 

\author{Peipei Lu}
\address{Department of Mathematics Sciences, Soochow University, Suzhou, 215006, China}
\email{pplu@suda.edu.cn}
\thanks{P.~Lu has been supported by the Alexander von Humboldt Foundation.}

\author{Andreas Rupp}
\address{Interdisciplinary Center for Scientific Computing (IWR), Heidelberg University, Mathematikon, Im Neuenheimer Feld 205, 69120 Heidelberg, Germany}
\email{andreas.rupp@fau.de, andreas.rupp@uni-heidelberg.de}
\thanks{This work is supported by the Deutsche Forschungsgemeinschaft (DFG, German Research Foundation) under Germany's Excellence Strategy EXC 2181/1 - 390900948 (the Heidelberg STRUCTURES Excellence Cluster).}

\author{Guido Kanschat}
\address{Interdisciplinary Center for Scientific Computing (IWR) and Mathematics Center Heidelberg (MATCH), Heidelberg University, Mathematikon, Im Neuenheimer Feld 205, 69120 Heidelberg, Germany}
\email{kanschat@uni-heidelberg.de}

\subjclass[2010]{65F10, 65N30, 65N50}

\date{\today}


\begin{abstract}
 We introduce a homogeneous multigrid method in the sense that it uses the same HDG discretization scheme for Poisson's equation on all levels. In particular, we construct a stable injection operator and prove optimal convergence of the method under the assumption of elliptic regularity. Numerical experiments underline our analytical findings.
\end{abstract}

\maketitle



%
%

\section{Introduction}
While hybridizable discontinuous Galerkin (HDG) methods have become
popular in mathematics and applications over the last years, literature
on efficient solution of the resulting discrete systems is still
scarce. In this article, we propose to our knowledge the first
multigrid preconditioner for such methods which is homogeneous in the
sense that it uses the same discretization scheme on all levels. Such
methods are important, since they have the same mathematical
properties on all levels. They can also be advantageous from a
computational point of view, since their data structures and execution patterns are more regular.

HDG methods have been gaining popularity in the last
decade. Originally, they were analyzed for the Laplacian, see for
instance~\cite{CockburnGL2009} for an overview. Meanwhile, they have
been applied to stationary~\cite{CockburnG2009,CockburnNP2020,CockburnSayas14,EggerWaluga13,NguyenPC2010,oikawa2016analysis} and
instationary~\cite{LehrenfeldSchoeberl16} Stokes systems, to the
locking-free discretization of problems in
elasticity~\cite{FuLehrenfeldLinkeStreckenbach20arxiv}, as well as to
plates~\cite{CockburnDongGuzman09,HuangHuang19} and beams~\cite{CelikerCockburnShi10}.
However, only few results have been achieved for solving the large systems
of equations arising from this type of numerical method, while
multigrid \cite{GopaK2003} and domain
decomposition \cite{FengKarakashian01} solvers for earlier
discontinuous Galerkin (DG) schemes have been available for many
years.

The first multigrid method for HDG discretizations was introduced in~\cite{CockburnDGT2013,TanPhD}.
It is basically a two-level method, where the ``coarse space'' consists of the piecewise
linear, conforming finite element space. The coarse grid solver consists of a 
conforming multigrid method for lowest order elements. In \cite{GopalakrishnanTan09}, similar
results have been applied to hybridized mixed (e.g. Raviart--Thomas (RT)) elements.

A BPX preconditioner for non-standard finite element methods, including hybridized RT, BDM, the weak Galerkin, and Crouzeix--Raviart methods, is analyzed in \cite{li2016bpx} and a two level algorithm for HDG methods for the diffusion problem is presented in \cite{li2016analysis}. All of the aforementioned  methods utilize the piecewise linear conforming finite element space as the auxiliary space.

Two-level analysis of HDG methods seems equally rare. A method for high order HDG discretizations is introduced in~\cite{SchoeberlLehrenfeld13}. Here, the authors focus on showing that standard $p$-version domain decomposition techniques can be applied and prove condition number estimates on tetrahedral meshes polylogarithmic in $p$.
More recently, a domain decomposition for a hybridized Stokes problem
was presented in~\cite{BarrenecheaBosyDoleanNatafTournier18}, but it
does not discuss a coarse space. Moreover, \cite{HeRS2020} discusses local Fourier analysis of interior penalty based multigrid methods for tensor-product type polynomials in two spatial dimensions.

The methods known so far are \emph{heterogeneous} in the sense that the multigrid cycle is not performed on the HDG discretization itself, but on a surrogate scheme.
In view of future generalization beyond second order
elliptic problems, we decided to devise a \emph{homogeneous} method which uses the same
discretization scheme on all levels, and thus only employs a single finite
element method. The analysis uses the abstract framework
developed for noninherited forms in~\cite{DuanGTZ2007}. It is based on
arguments found in~\cite{Gopa2003,TanPhD}. Nevertheless, the coarse grid
operator is genuinely HDG and of the same type as the fine grid
operator. Only the injection operator from coarse to fine level uses
continuous interpolation in an intermediate step.

Since we focus on new coarse spaces and intergrid operators, we rely on standard smoothers and analyze a standard Poisson problem with elliptic regularity to present the basic ideas of our method. However, the regularity assumtion could be weakened utilizing the ideas of~\cite{BramblePX1991}, of~\cite{Brenner1999} (which analyzes nonconforming multigrid methods), of~\cite{GopaK2003} (which deals with multigrid methods for DG), and of~\cite{GopalakrishnanTan09} at the cost of more technicalities within our proofs. As for robust smoothers for higher order methods, this will be subject to further research.

The remainder of this paper is structured as follows: In Section \ref{SEC:basics}, we review the HDG method for elliptic PDEs. Furthermore, an overview over the  used function spaces, scalar products, and operators is given. Section \ref{SEC:multigrid} is devoted to the definition of the multigrid and states its main convergence result. Sections \ref{SEC:proof_a1} and \ref{SEC:proof_a2_a3} verify the assumptions of the main convergence result, while Section \ref{SEC:numerics} underlines its validity by numerical experiments. Short conclusions wrap up the paper.

%
\section{Model equation and discretization}\label{SEC:basics}
%
We consider the standard diffusion equation in mixed form defined on a polygonally bounded Lipschitz domain $\Omega \subset \mathbb R^d$ with boundary $\partial \Omega$. We assume homogeneous Dirichlet boundary conditions on $\partial\Omega$. Thus, we approximate solutions $(u, \vec q)$ of
\begin{subequations}\label{EQ:diffusion_mixed}
\begin{align}
 \Div \vec q & = f && \text{ in } \Omega,\\
 \vec q + \nabla u & = 0 && \text{ in } \Omega,\\
 u & = 0 && \text{ on } \partial \Omega,
\end{align}
\end{subequations}
for a given function $f$. In the analysis, we will assume elliptic regularity, namely $u \in H^2(\Omega)$ if $f \in L^2(\Omega)$, such there is a constant $c>0$ for which holds
\begin{gather}
  |u|_{H^2(\Omega)}
  \le c \| f \|_{L^2(\Omega)}.
\end{gather}
Here, and in the following, $L^2(\Omega)$ denotes the space of square integrable functions on $\Omega$ with inner product and norm
\begin{equation}
 (u,v)_0 := \int_\Omega u v \dx, \qquad \text{and} \qquad \| u \|^2_0 := (u,u)_0.
\end{equation}
The space $H^k(\Omega)$ is the Sobolev space of $k$-times weakly
differentiable functions with derivatives in $L^2(\Omega)$. We note
that the assumption of homogeneous boundary data was introduced for
simplicity of presentation and can be lifted by standard arguments.

\subsection{Spaces for the HDG multigrid method}
%
Starting out from a subdivision $\mesh_0$ of $\Omega$ into simplices,
we construct a hierarchy of meshes $\mesh_\level$ for
$\level = 1,\dots, L$ recursively by refinement, such that each cell
of $\mesh_{\level-1}$ is the union of several cells of mesh
$\mesh_\level$. We assume that the mesh is regular, such that each facet
of a cell is either a facet of another cell or on the
boundary. Furthermore, we assume that the hierarchy is shape regular
and thus the cells are neither anisotropic nor otherwise distorted.
We call $\level$ the level of the quasi-uniform mesh $\mesh_\level$ and denote by
$h_\level$ the characteristic length of its cells. We assume that
refinement from one level to the next is not too fast, such that there
is a constant $c_\text{ref} > 0$ with
\begin{gather}
  h_\level \ge c_\text{ref} h_{\level-1}.
\end{gather}

By $\faceSet_\level$ we denote the set of faces of $\mesh_\level$.
The subset of faces on the boundary is
\begin{gather}
  \faceSetDir_\level := \{\face \in \faceSet_\level : \face \subset \partial \Omega \}.
\end{gather}
Moreover, we define
$\faceSet^\elem_\level := \{ \face \in \faceSet_\level : \face \subset
\partial \elem \}$ as the set of faces of a cell $\elem\in\mesh_\level$.  On the set of faces, we define the space $L^2(\faceSet_\ell)$ as the space of
square integrable functions with the inner product
\begin{gather}
  \llangle \lambda, \mu \rrangle_\level = \sum_{\elem \in \mesh_\level} \int_{\partial \elem} \lambda\mu\ds,
\end{gather}
and its induced norm
$\nnorm \mu \nnorm^2_\level = \llangle \mu, \mu
\rrangle_\level$. Note that interior faces appear twice in this definition such that expressions like $\llangle u, \mu \rrangle_\level$ with possibly discontinuous $u|_{\elem} \in H^1(\elem)$ for all $\elem \in \mesh_\level$ and $\mu \in L^2(\faceSet_\level)$ are defined without further ado. Additionally, we define an inner product
commensurate with the $L^2$-inner product in the bulk domain, namely
\begin{gather}
  \langle \lambda, \mu \rangle_\level
  = \sum_{\elem \in \mesh_\level} \frac{|\elem|}{|\partial \elem|}
  \int_{\partial \elem} \lambda \mu \ds \cong \sum_{\face \in \faceSet_\level} h_\face
  \int_{\face} \lambda \mu \ds.
\end{gather}
Its induced norm is $ \| \mu \|^2_\level = \langle \mu, \mu \rangle_\level$.

Let $p\ge 1$ and $\polynomials_p$ be the space of (multivariate)
polynomials of degree up to $p$. Then, we define the space of piecewise
polynomials on the skeleton by
\begin{gather}
  \label{EQ:skeletal_space}
  \skeletalSpace_\level := \left\{ \lambda \in L^2(\faceSet_\level) \;\middle|\;
    \begin{array}{r@{\,}c@{\,}ll}
  \lambda_{|\face} &\in& \polynomials_p & \forall \face \in \faceSet_\level\\
  \lambda_{|\face} &=& 0 & \forall \face \in \faceSetDir_\level    
    \end{array}
  \right\}.
\end{gather}

The HDG method involves a local solver on each mesh cell
$\elem \in \mesh_\level$, producing cellwise approximations $u_\elem \in V_\elem$
and $\vec q_\elem\in \vec W_\elem$ of the functions $u$ and $\vec q$ in
equation~\eqref{EQ:diffusion_mixed}, respectively. We choose
$V_\elem = \polynomials_p$. Then, choosing
$\vec W_\elem = \polynomials_p^d$ yields the so called hybridizable local discontinuous Galerkin (LDG-H) scheme. Our current analysis is in fact limited to this case and other choices require a modification of Lemma~\ref{LEM:u_q_bound}. We will also use the concatenations of the spaces $V_\elem$
and $\vec W_\elem$, respectively, as a function space on $\Omega$, namely
\begin{gather}
  \label{EQ:dg_spaces}
  \begin{aligned}
    \discElementSpace_\level
    &:=\bigl\{ v \in L^2(\Omega)
    & \big|\;v_{|\elem} &\in V_\elem,
    &\forall \elem &\in \mesh_\level \bigr\},\\
    \vec W_\level
    &:=\bigl\{ \vec q \in L^2(\Omega;\mathbb R^d)
    & \big|\;\vec q_{|\elem} &\in \vec W_\elem,
    &\forall \elem &\in \mesh_\level \bigr\}.    
  \end{aligned}
\end{gather}
%
\subsection{Hybridizable discontinuous Galerkin method for the diffusion equation}\label{SEC:HDG_definition}
%
The HDG scheme for~\eqref{EQ:diffusion_mixed} on a mesh $\mesh_\level$
consists of a local solver and a global coupling equation. The local
solver is defined cellwise by a weak formulation
of~\eqref{EQ:diffusion_mixed} in the discrete spaces
$V_\elem \times \vec W_\elem$ and defining suitable numerical traces and fluxes. Namely, given
$\lambda \in \skeletalSpace_\level$ find $u_\elem \in V_\elem$ and
$\vec q_\elem \in \vec W_\elem$ , such that
\begin{subequations}\label{EQ:hdg_scheme}
\begin{align}
  \int_\elem \vec q_\elem \cdot \vec p_\elem \dx - \int_\elem u_\elem \Div \vec p_\elem \dx
  & = - \int_{\partial \elem} \lambda \vec p_\elem \cdot \Nu \ds
    \label{EQ:hdg_primary}
  \\
  - \int_\elem \vec q_\elem \cdot \nabla v_\elem \dx  + \int_{\partial \elem} ( \vec q_\elem \cdot \Nu + \tau_\level u_\elem ) v_\elem \ds
  & = \tau_\level \int_{\partial \elem} \lambda v_\elem \ds \label{EQ:hdg_flux}
\end{align}
\end{subequations}
hold for all $v_\elem \in V_\elem$, and all $\vec p_\elem \in \vec W_\elem$, and for
all $\elem \in \mesh_\level$. Here,
$\Nu$ is the outward unit normal with respect to $\elem$ and $\tau_\level > 0$
is the penalty coefficient. While the local solvers are implemented
cell by cell, it is helpful for the analysis to combine them by
concatenation. Thus, the local solvers define a mapping
\begin{gather}
  \begin{split}
    \skeletalSpace_\level & \to \discElementSpace_\level \times \vec W_\level\\
   \lambda &\mapsto (\localU_\level \lambda, \localQ_\level \lambda),
 \end{split}
\end{gather}
where for each cell $\elem\in \mesh_\level$ holds
$\localU_\level \lambda = u_\elem$ and
$ \localQ_\level \lambda = \vec q_\elem$. In the same way, we define
operators $\localU_\level f$ and $ \localQ_\level f$ for
$f\in L^2(\Omega)$, where now the local solutions are defined by
the system
\begin{subequations}\label{EQ:hdg_f}
  \begin{align}
    \int_\elem \vec q_\elem \cdot \vec p_\elem \dx - \int_\elem u_\elem \Div \vec p_\elem \dx
    & = 0
      \label{EQ:hdg_f_primary}
    \\
    - \int_\elem \vec q_\elem \cdot \nabla v_\elem \dx  + \int_{\partial \elem} ( \vec q_\elem \cdot \Nu + \tau_\level u_\elem ) v_\elem \ds
    & =  \int_{\elem} f v_\elem \dx.
      \label{EQ:hdg_f_flux}
\end{align}
\end{subequations}

Once $\lambda$ has been computed, the HDG approximation
to~\eqref{EQ:diffusion_mixed} on mesh $\mesh_\level$ will be computed as
\begin{gather}
  \begin{split}
    u_\level &= \localU_\level \lambda + \localU_\level f\\
    \vec q_\level &= \localQ_\level \lambda + \localQ_\level f
  \end{split}
\end{gather}

The global coupling condition is derived through a discontinuous
Galerkin version of mass balance and reads: Find
$\lambda \in \skeletalSpace_\level$, such that for all
$ \mu \in \skeletalSpace_\level$
\begin{equation}
  \sum_{\elem \in \mesh_\level}
  \sum_{\face \in \faceSet^\elem_\level \setminus \faceSetDir_\level}
   \int_\face \left( \vec q_\level \cdot \Nu
    + \tau_\level (u_\level - \lambda)\right) \mu \ds = 0.\label{EQ:hdg_global}
\end{equation}

In~\cite{CockburnGL2009}, it is shown that $\lambda \in \skeletalSpace_\level$ is the solution of the coupled system from~\eqref{EQ:hdg_scheme} to~\eqref{EQ:hdg_global} if and only if it is the solution of
\begin{subequations}\label{EQ:hdg_condensed}
\begin{equation}\label{EQ:hdg_condensed_forms}
 a_\level (\lambda, \mu) = b_\level(\mu) \qquad \forall \mu \in \skeletalSpace_\level,
\end{equation}
with
\begin{align}
 a_\level(\lambda, \mu) = & \int_\Omega \localQ_\level \lambda \localQ_\level \mu \dx + \sum_{\elem \in \mesh_\level} \int_{\partial \elem} \tau_\level (\localU_\level \lambda - \lambda) (\localU_\level \mu - \mu) \ds \label{EQ:bilinear_condensed},\\
 b_\level(\mu) = & \int_\Omega \localU_\level \mu f \dx.
\end{align}
\end{subequations}
Furthermore, the bilinear form $a_\level(\lambda, \mu)$ is symmetric and positive definite.
Thus, it induces a norm
\begin{gather}
  \| \mu \|^2_{a_\level} = a_\level(\mu, \mu),
\end{gather}

We close this subsection by associating an operator
$A_\ell\colon \skeletalSpace_\level \to \skeletalSpace_\level$ with
the bilinear form $a_\level(\cdot,\cdot)$ by the relation
\begin{gather}
 \langle A_\level \lambda, \mu \rangle_\level = a_\level(\lambda, \mu) \qquad \forall \mu \in \skeletalSpace_\level.
\end{gather}

\subsection{The injection operator $\injectionOp_\level$}
%
The difficulty of devising an ``injection operator''
$I_\level: \skeletalSpace_{\level-1} \to \skeletalSpace_\level$ originates from the fact that the finer mesh
has edges which are not refinements of the edges of the coarse
mesh. In \cite{TanPhD}, several possible injection operators are discussed, but turn
out to be unstable. In order to assign reasonable values to these edges, we
construct the injection operator in three steps. First, introduce the continuous finite element space
\begin{gather}
  \contElementSpace_\level := \bigl\{ u \in H^1_0(\Omega)
  \;\big|\;
  u_{|\elem} \in \polynomials_p(\elem) \quad \forall \elem \in \mesh_\level\bigr\}.
  \label{EQ:cg_space}
\end{gather}
We assume that the shape function basis on each mesh cell $T$ is
defined through a Lagrange interpolation condition with respect to
support points $\vec x$ located on vertices, edges, and in the interior of the
cell. Thus, a function in $\contElementSpace_\level$ is uniquely
determined by the values in these support points. We now define the continuous extension operator
\begin{equation}
  \extensionOp_\level\colon \skeletalSpace_{\level} \to \contElementSpace_{\level},
\end{equation}
by the interpolation conditions
\begin{gather}
  \label{EQ:define_uc}
  [\extensionOp_\level\lambda](\vec x) =
  \begin{cases}
    \overline\lambda(\vec x) & \text{if $\vec x$ is on the boundary of a face}, \\
    \lambda(\vec x) & \text{if $\vec x$ is in the interior of a face}, \\
    [\localU_\level\lambda](\vec x) & \text{if $\vec x$ is in the interior of a cell}.
  \end{cases}
\end{gather}
Here, $\overline\lambda$ is the arithmetic mean of the values attained
by $\lambda$ on different faces meeting in $\vec x$.

The spaces $\contElementSpace_{\level}$ are nested, such that there is a
 natural injection operator
\begin{equation}
  \begin{aligned}
    I_\level^c\colon \contElementSpace_{\level-1} &\to \contElementSpace_{\level} \\ u &\mapsto u.
  \end{aligned}
\end{equation}
On $\contElementSpace_{\level}$ the trace on edges is well defined, such that we can write
\begin{equation}
  \begin{aligned}
    \traceOp\colon \contElementSpace_{\level} &\to \skeletalSpace_{\level} \\ u &\mapsto \traceOp u.
  \end{aligned}
\end{equation}
Using these, we define the injection operator $I_\level$ as the
concatenation of extension, natural injection, and trace, namely
\begin{equation}
  \begin{aligned}
    I_\level\colon \skeletalSpace_{\level-1} &\to \skeletalSpace_\level \\
    \lambda &\mapsto \traceOp I_\level^c \extensionOp_{\level-1} \lambda.
  \end{aligned}
\end{equation}
By its definition, $I_\level$ is the operator such that this diagram commutes:
\begin{center}
  \includegraphics[width=.4\textwidth]{tikz/injection.tikz}
\end{center}

Moreover, if $\tau_\level \sim h_\level^{-1}$, the following Lemma can be proved similarly to \cite[Thms.~3.6 and~3.8]{ChenLX2014}:

\begin{lemma}[Boundedness]
  \label{LEM:injection_bounded}
  The injection operator $\injectionOp_\level$ is bounded in the sense that
  \begin{equation}
   a_\level(\injectionOp_\level \lambda , \injectionOp_\level \lambda) \lesssim a_{\level-1}(\lambda, \lambda) \qquad \forall\lambda \in \skeletalSpace_{\level-1}.
  \end{equation}
\end{lemma}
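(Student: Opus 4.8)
The plan is to bound each of the two contributions to $a_\level(\injectionOp_\level\lambda,\injectionOp_\level\lambda)$ — the volume term $\int_\Omega |\localQ_\level \injectionOp_\level\lambda|^2\dx$ and the penalty term $\sum_\elem \int_{\partial\elem}\tau_\level(\localU_\level \injectionOp_\level\lambda - \injectionOp_\level\lambda)^2\ds$ — by $a_{\level-1}(\lambda,\lambda)$, factoring through the continuous function $w := I_\level^c\extensionOp_{\level-1}\lambda \in \contElementSpace_\level$. The first step is to record a characterization of $\|\mu\|_{a_\level}^2$ in terms of the local solver that is better suited to estimation: since $(\localU_\level\mu,\localQ_\level\mu)$ solves the local problem~\eqref{EQ:hdg_scheme}, testing with $(\vec p_\elem,v_\elem)=(\localQ_\level\mu,\localU_\level\mu)$ and combining the two equations gives the standard HDG energy identity $a_\level(\mu,\mu) = \sum_\elem\big(\|\localQ_\level\mu\|_{0,\elem}^2 + \|\tau_\level^{1/2}(\localU_\level\mu-\mu)\|_{0,\partial\elem}^2\big)$, together with the fact that, among all pairs $(v,\vec r)\in V_\elem\times\vec W_\elem$, this pair realizes a minimum of a suitable quadratic functional subject to the constraint coming from~\eqref{EQ:hdg_primary}. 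Concretely, I would use that $\|\mu\|_{a_\level}^2 \lesssim \sum_\elem \big( \|\nabla v\|_{0,\elem}^2 + \tau_\level\|v - \mu\|_{0,\partial\elem}^2\big)$ for \emph{any} competitor $v\in V_\elem$ with $v$ chosen conveniently — this is exactly the content behind \cite[Thms.~3.6 and~3.8]{ChenLX2014}, comparing the HDG norm with a broken $H^1$-type seminorm of the reconstruction.

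With that tool in hand, the natural competitor for $\injectionOp_\level\lambda = \traceOp w$ is $w$ itself, restricted cellwise. So the second step is to show
\begin{equation}
  \label{EQ:plan_competitor}
  a_\level(\injectionOp_\level\lambda,\injectionOp_\level\lambda)
  \lesssim \sum_{\elem\in\mesh_\level}\Big( \|\nabla w\|_{0,\elem}^2
  + \tau_\level \|w - \traceOp w\|_{0,\partial\elem}^2\Big)
  = |w|_{H^1(\Omega)}^2,
\end{equation}
where the last equality holds because $w$ is continuous, so its trace from inside $\elem$ agrees with $\traceOp w$ and the penalty term vanishes identically. The third step is then purely a statement about the continuous spaces: $I_\level^c$ is the identity inclusion, so $|w|_{H^1(\Omega)} = |\extensionOp_{\level-1}\lambda|_{H^1(\Omega)}$, and it remains to prove the single-level bound
\begin{equation}
  \label{EQ:plan_extension}
  |\extensionOp_{\level-1}\lambda|_{H^1(\Omega)}^2 \lesssim a_{\level-1}(\lambda,\lambda)
  \qquad \forall \lambda\in\skeletalSpace_{\level-1}.
\end{equation}
This is where the definition~\eqref{EQ:define_uc} of $\extensionOp_{\level-1}$ does its work: on each cell $\elem\in\mesh_{\level-1}$, $\extensionOp_{\level-1}\lambda$ is the Lagrange interpolant of data that is $\lambda$ at interior face points, $\overline\lambda$ (the arithmetic mean) at face boundaries, and $\localU_{\level-1}\lambda$ at interior points. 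By a scaling/equivalence-of-norms argument on the reference cell, $|\extensionOp_{\level-1}\lambda|_{H^1(\elem)}^2$ is controlled by $h_\elem^{d-2}$ times the sum of squared nodal values relative to any one node, hence by $\|\nabla\localU_{\level-1}\lambda\|_{0,\elem}^2$ plus $h_\elem^{-1}\|\localU_{\level-1}\lambda - \lambda\|_{0,\partial\elem}^2$ plus a term measuring the averaging jump $h_\elem^{-1}\|\lambda - \overline\lambda\|^2$ across faces meeting at a vertex. Summing over cells and using $\tau_{\level-1}\sim h_{\level-1}^{-1}$ together with shape regularity, the first two groups are exactly $\lesssim a_{\level-1}(\lambda,\lambda)$ by the energy identity; the averaging term is bounded by the jump $\sum_\elem \tau_{\level-1}\|\localU_{\level-1}\lambda - \lambda\|_{0,\partial\elem}^2$ via a discrete trace/triangle-inequality argument (the values on adjacent faces both differ from the single-valued $\localU_{\level-1}\lambda$ at shared nodes), again inside $a_{\level-1}(\lambda,\lambda)$.

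The main obstacle I expect is~\eqref{EQ:plan_extension}, and within it specifically the treatment of the arithmetic-mean values $\overline\lambda$ at shared vertices/edges: one must convert a pointwise discrepancy between face traces of $\lambda$ into a bound by the HDG penalty energy, which requires inverse inequalities on faces and careful bookkeeping of how many faces meet at a support point (controlled by shape regularity). The other delicate point is getting the powers of $h_\level$ and $\tau_\level$ to match across the two levels — this is where $\tau_\level\sim h_\level^{-1}$ and the quasi-uniformity/bounded-refinement assumption $h_\level\ge c_\text{ref}h_{\level-1}$ enter, ensuring $\tau_\level$ on the fine level and $\tau_{\level-1}$ on the coarse level are comparable on the relevant faces. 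Everything else is reference-element scaling and summation, and the cited results of Chen--Xu--Luo supply the HDG-norm/$H^1$-seminorm equivalence that makes the competitor argument~\eqref{EQ:plan_competitor} legitimate.
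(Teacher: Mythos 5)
Your proposal is correct and is essentially the proof the paper intends: the paper itself only points to \cite[Thms.~3.6 and~3.8]{ChenLX2014}, whose content is exactly the two ingredients you reconstruct --- the quasi-minimization/competitor bound $a_\level(\traceOp w,\traceOp w)\lesssim\sum_{\elem}\|\nabla w\|^2_{0,\elem}$ for continuous piecewise polynomials $w$, and the single-level estimate $|\extensionOp_{\level-1}\lambda|^2_{H^1(\Omega)}\lesssim a_{\level-1}(\lambda,\lambda)$ (the latter is \cite[Lem.~3.4]{ChenLX2014}, which the paper also invokes in Lemma~\ref{LEM:extract_hk_from_U}). Both of your key steps indeed require $\tau_\level\cong h_\level^{-1}$, consistent with the hypothesis under which the lemma is stated.
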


\subsection{Operators for the multigrid method and analysis}

  After the discrete operator $A_\level$ and the injection operator
  $\injectionOp_\level$ have been defined, we introduce the remaining
  operators here. First, there are two operators from
  $\skeletalSpace_{\level}$ to $\skeletalSpace_{\level-1}$, which
  replace the $L^2$ projection and the Ritz projection of conforming
  methods, respectively. They are $\projectionOrthogonalOP_{\level-1}$
  and $\projectionOp_{\level-1}$ defined by the conditions
\begin{xalignat}3
   \projectionOrthogonalOP_{\level-1}&\colon \skeletalSpace_\level \to \skeletalSpace_{\level-1},
   &\langle \projectionOrthogonalOP_{\level-1} \lambda, \mu \rangle_{\level-1}
   &= \langle \lambda, \injectionOp_\level \mu\rangle_\level
   && \forall \mu \in \skeletalSpace_{\level-1}.
   \label{EQ:L2_projection_definition}
   \\
   \projectionOp_{\level-1}&\colon \skeletalSpace_\level \to \skeletalSpace_{\level-1},
   &a_{\level-1}(\projectionOp_{\level-1} \lambda, \mu)
   &= a_\level(\lambda, \injectionOp_\level \mu)
   && \forall \mu \in \skeletalSpace_{\level-1},
   \label{EQ:projection_definition}
 \end{xalignat}
 The operator $\projectionOrthogonalOP_{\level-1}$ is used in the
 implementation, while $\projectionOp_{\level-1}$ is key to the
 analysis.

The multigrid operator for preconditioning $A_\level$ will be defined in
  Section \ref{SEC:multigrid_algortith}. It will be referred to as
  \begin{gather}
    B_\level\colon \skeletalSpace_\level \to \skeletalSpace_\level.
 \end{gather}
 It relies on a smoother
 \begin{gather}
   R_\level: \skeletalSpace_\level \to \skeletalSpace_\level,
 \end{gather}
  which can be defined in terms of Jacobi or Gauss-Seidel
  iterations, respectively. Denote by $R_\level^\dagger$ the adjoint operator of
  $R_\level$ with respect to
  $\langle \cdot, \cdot \rangle_\level$ and define $R_\level^\iterMgOuter$ by
 \begin{equation}
  R_\level^\iterMgOuter = \begin{cases} R_\level & \text{ if } \iterMgOuter \text{ is odd,} \\ R_\level^\dagger & \text{ if } \iterMgOuter \text{ is even.} \end{cases}
\end{equation}

At this point, we have defined HDG versions of all operators involved
in standard multigrid analysis. Additionally, we define the averaging operator
\begin{gather}
  \averagingOp_\level\colon\discElementSpace_\level \to \contElementSpace_\level.
\end{gather}
Analog to~\eqref{EQ:define_uc}, it is defined by interpolation in the
support points $\vec x$ of the shape functions of the space $\contElementSpace_\level$, namely
\begin{equation}
 \left[\averagingOp_\level u\right] (\vec x) = \overline u(\vec x).
\end{equation}
Here $\overline u$ is the arithmetic mean of the values $u(\vec x)$ from all mesh cells meeting at $\vec x$. For $\vec x\in \partial\Omega$, we let $\left[\averagingOp_\level u\right] (\vec x) = 0$.

A summary of the different operators connecting the spaces can
be found in Figure \ref{FIG:spaces_and_operators}.
\begin{figure}[b]
 \includegraphics[width=\textwidth]{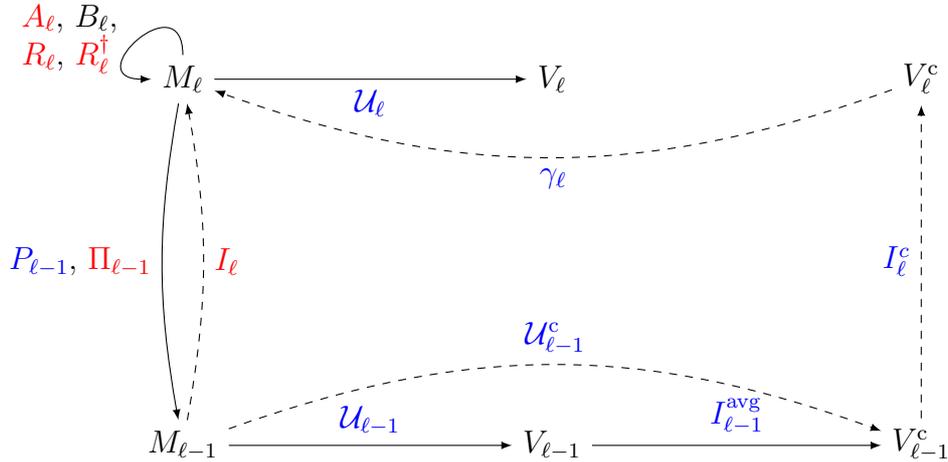}
 \caption{Sketch of the different operators connecting function spaces of refinement levels $\level$ and $\level-1$. Here, the operators needed to implement the multigrid preconditioner $B_\level$ are depicted \textcolor{red}{red}, while operators only appearing in the analysis are in \textcolor{blue}{blue} and spaces are black. The dashed arrows commute, while in general $\extensionOp_{\level-1} \neq \averagingOp_{\level-1} \circ \localU_{\level-1}$.}\label{FIG:spaces_and_operators}
\end{figure}

%

%
\section{Multigrid method and main convergence result}\label{SEC:multigrid}
%
We consider a standard (symmetric) V-cycle multigrid method for \eqref{EQ:hdg_condensed} (cf. \cite{BramblePX1991,DuanGTZ2007}) for which we conduct convergence analysis (as done in \cite{DuanGTZ2007}). Thus, Section \ref{SEC:multigrid_algortith} is devoted to illustrating the multigrid method (cf. \cite{DuanGTZ2007} where the algorithm is also taken from), while Section \ref{SEC:main_convergence_result} states the main convergence result.

Let us begin citing an estimate for eigenvalues and condition numbers of the matrices $A_\level$.
\begin{lemma}
 Suppose that $\mesh_\level$ is quasiuniform. Then, there are positive constants $C_1$ and $C_2$ independent of $\level$ such that
 \begin{equation}
  C_1 \| \lambda \|^2_\level \le a_\level ( \lambda, \lambda ) \le \beta_\level C_2 h_\level^{-2} \| \lambda \|^2_\level, \qquad \forall \lambda \in \skeletalSpace_\level,
 \end{equation}
 where $\beta_\level := 1 + (\tau_\level h_\level)^2$.
\end{lemma}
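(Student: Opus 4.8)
The plan is to establish the two inequalities separately, using the characterization of $a_\level$ in~\eqref{EQ:bilinear_condensed} together with the standard scaling relations for the HDG local solver on a quasiuniform, shape-regular mesh. The key auxiliary facts I would invoke are: (i) on each cell $\elem$, the discrete inf--sup / stability properties of the local solver~\eqref{EQ:hdg_scheme} give that $\|\localQ_\level\lambda\|_{L^2(\elem)}$ and $\tau_\level^{1/2}\|\localU_\level\lambda-\lambda\|_{L^2(\partial\elem)}$ are both controlled by, and jointly control, a mesh-dependent norm of $\lambda$ on $\partial\elem$ (this is essentially Lemma~\ref{LEM:u_q_bound}, referenced in the excerpt); (ii) the discrete trace/inverse inequality $\|v\|_{L^2(\partial\elem)}\lesssim h_\level^{-1/2}\|v\|_{L^2(\elem)}$ for polynomials, and its partner $\|v\|_{L^2(\elem)}\lesssim h_\level^{1/2}\|v\|_{L^2(\partial\elem)}$ when restricted to the relevant finite-dimensional spaces; and (iii) the equivalence $\langle\mu,\mu\rangle_\level=\|\mu\|_\level^2\cong\sum_{\face}h_\face\int_\face\mu^2\ds$ built into the definition of $\langle\cdot,\cdot\rangle_\level$, so that $\|\mu\|_\level^2 \cong h_\level\,\nnorm\mu\nnorm_\level^2$ on a quasiuniform mesh.

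For the \emph{lower} bound $C_1\|\lambda\|_\level^2\le a_\level(\lambda,\lambda)$: I would first reduce, via the norm equivalence in (iii), to showing $\nnorm\lambda\nnorm_\level^2 \lesssim h_\level^{-1}a_\level(\lambda,\lambda)$. By~\eqref{EQ:bilinear_condensed}, $a_\level(\lambda,\lambda)=\|\localQ_\level\lambda\|_0^2+\sum_\elem\tau_\level\|\localU_\level\lambda-\lambda\|_{L^2(\partial\elem)}^2$. Writing $\lambda = (\lambda-\localU_\level\lambda)+\localU_\level\lambda$ on $\partial\elem$, the first piece is directly bounded by $\tau_\level^{-1}$ times the boundary term in $a_\level$, and since $\tau_\level\sim h_\level^{-1}$ this contributes the right power of $h_\level$. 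For the second piece $\|\localU_\level\lambda\|_{L^2(\partial\elem)}$, I would use the first local equation~\eqref{EQ:hdg_primary} tested cleverly (or the stability estimate of Lemma~\ref{LEM:u_q_bound}) to bound $\localU_\level\lambda$ on the cell in terms of $\|\localQ_\level\lambda\|_{L^2(\elem)}$ and the boundary data, then apply the discrete trace inequality; summing over cells closes this direction. The \emph{upper} bound $a_\level(\lambda,\lambda)\le \beta_\level C_2 h_\level^{-2}\|\lambda\|_\level^2$ goes the other way: each of the two terms in $a_\level$ must be bounded by $\beta_\level h_\level^{-2}\|\lambda\|_\level^2$. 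Here the natural route is to bound the local solver outputs by the boundary data, $\|\localQ_\level\lambda\|_{L^2(\elem)}+\tau_\level^{1/2}\|\localU_\level\lambda-\lambda\|_{L^2(\partial\elem)}\lesssim (1+\tau_\level^{1/2}h_\level^{-1/2}+\dots)\|\lambda\|_{L^2(\partial\elem)}$ — tracking the $\tau_\level$-dependence carefully is what produces the factor $\beta_\level=1+(\tau_\level h_\level)^2$ — and then convert $\nnorm\lambda\nnorm_\level^2$ back to $\|\lambda\|_\level^2$ at the cost of $h_\level^{-1}$, with one further inverse inequality contributing the remaining $h_\level^{-1}$.

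The main obstacle I expect is the precise bookkeeping of the penalty parameter $\tau_\level$ through the local solver, i.e.\ proving the sharp two-sided stability estimate for $(\localU_\level\lambda,\localQ_\level\lambda)$ in terms of $\lambda|_{\partial\elem}$ with the correct dependence on $\tau_\level h_\level$; this is exactly the content that the excerpt isolates in Lemma~\ref{LEM:u_q_bound}, and everything else is routine scaling. A secondary, purely technical point is handling the Dirichlet faces (where $\lambda=0$) so that the cellwise estimates sum cleanly without boundary-layer losses; shape regularity and quasiuniformity make the summation over cells lossless. Once Lemma~\ref{LEM:u_q_bound} is in hand, both inequalities follow by assembling the cellwise estimates and invoking the norm equivalence $\|\cdot\|_\level\cong h_\level^{1/2}\nnorm\cdot\nnorm_\level$.
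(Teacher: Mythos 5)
The paper does not actually prove this lemma; it cites it as a corollary of Theorem~3.2 in \cite{CockburnDGT2013}, so any worked argument is by construction a different route. Your upper bound is essentially sound: the local solver \eqref{EQ:hdg_scheme} is a well-posed cell-local problem, and testing it with $(u_\elem,\vec q_\elem)$ together with the discrete trace inequality yields $\|\localQ_\level\lambda\|_{L^2(\elem)}^2+\tau_\level\|\localU_\level\lambda-\lambda\|_{L^2(\partial\elem)}^2\lesssim(1+(\tau_\level h_\level)^2)\,h_\level^{-1}\|\lambda\|_{L^2(\partial\elem)}^2$, which after the equivalence $\|\lambda\|_\level^2\cong h_\level\sum_\elem\|\lambda\|_{L^2(\partial\elem)}^2$ gives the claimed $\beta_\level h_\level^{-2}$ factor; this is purely local and sums cleanly.

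The genuine gap is in the lower bound. After splitting $\lambda=(\lambda-\localU_\level\lambda)+\localU_\level\lambda$, you propose to bound $\|\localU_\level\lambda\|_{L^2(\partial\elem)}$ by using \eqref{EQ:hdg_primary} ``tested cleverly'' to control $\localU_\level\lambda$ on the cell ``in terms of $\|\localQ_\level\lambda\|_{L^2(\elem)}$ and the boundary data''. This cannot work cellwise: if ``boundary data'' means $\lambda|_{\partial\elem}$, the argument is circular (that is exactly the quantity being estimated), and without it the estimate is false, since constants lie in the local kernel --- for $\lambda$ constant on $\partial\elem$ one has $\localQ_\level\lambda=0$ and $\localU_\level\lambda-\lambda=0$ while $\|\localU_\level\lambda\|_{L^2(\elem)}\neq 0$. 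The inequality $C_1\|\lambda\|_\level^2\le a_\level(\lambda,\lambda)$ is a discrete Poincar\'e--Friedrichs inequality; it holds only because of the homogeneous Dirichlet condition built into $\skeletalSpace_\level$ and requires a global argument (for instance $\|\localU_\level\lambda\|_0\lesssim\|\lambda\|_{a_\level}$, which is the substance of the cited Theorem~3.2), so ``summing over cells'' does not close this direction. Two smaller points: Lemma~\ref{LEM:u_q_bound} is not the two-sided, $\tau$-sharp local stability estimate you attribute to it --- it is a one-sided bound of the penalty term by a flux term under the assumption $\tau_\level h_\level\le c$ --- and your treatment of the term $\tau_\level^{-1}\cdot\tau_\level\nnorm\localU_\level\lambda-\lambda\nnorm_\level^2$ quietly uses $\tau_\level\sim h_\level^{-1}$, an assumption that does not appear in the lemma statement.
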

\begin{proof}
 This is a Corollary of Theorem~3.2 in \cite{CockburnDGT2013}.
\end{proof}
This implies that for the stiffness matrix, we can bound the condition number $\kappa_\level$ by
\begin{equation}
 \kappa_\level \lesssim \beta_\level h_\level^{-2}
\end{equation}
which implies that for all choices of $\tau_\level$ satisfying $\tau_\level \lesssim h_\level^{-1}$ the condition number grows like $h_\level^{-2}$. Here and in the following, $\lesssim$ has the meaning of smaller than or equal to up to a constant only dependent on the regularity constant of the mesh family and $c_\text{ref}$.
%
\subsection{Multigrid algorithm}\label{SEC:multigrid_algortith}
%
Let $\iterMgInner \in \mathbb N \setminus \{ 0 \}$ be the number of fine-level smoothing steps. We recursively define the multigrid operator of the refinement level $\level$
\begin{equation}
 B_\level \; : \quad \skeletalSpace_\level \to \skeletalSpace_\level,
\end{equation}
by the following steps. Let $B_0 = A^{-1}_0$. For $\level > 0$, let $x^0 = 0 \in \skeletalSpace_\level$. Then for $\mu\in\skeletalSpace_\level$,
\begin{enumerate}
 \item Define $x^\iterMgOuter \in \skeletalSpace_\level$ for $\iterMgOuter = 1, \ldots, \iterMgInner$ by
 \begin{equation}
  x^\iterMgOuter = x^{\iterMgOuter-1} + R_\level^{\iterMgOuter} ( \mu - A_\level x^{\iterMgOuter-1} ).
 \end{equation}
 \item Set $y^0 = x^\iterMgInner + \injectionOp_\level q$, where $q \in \skeletalSpace_{\level-1}$ is defined as
 \begin{equation}
  q = B_{\level-1} \projectionOrthogonalOP_{\level-1} ( \mu - A_\level x^\iterMgInner).
 \end{equation}
 \item Define $y^\iterMgOuter \in \skeletalSpace_\level$ for $\iterMgOuter = 1, \ldots, \iterMgInner$ as
 \begin{equation}
  y^\iterMgOuter = y^{\iterMgOuter - 1} + R^{\iterMgOuter+\iterMgInner}_\level ( \mu - A_\level y^{\iterMgOuter-1} ).
\end{equation}
\item  Let $B_\level \mu = y^{\iterMgInner}$.
\end{enumerate}
%
\subsection{Main convergence result}\label{SEC:main_convergence_result}
%
The analysis of the multigrid method is based on the framework
introduced in~\cite{DuanGTZ2007}. There, convergence is traced back to
three assumptions. Let $\underline \lambda^A_\level$ be the largest eigenvalue of $A_\level$, and
\begin{gather}
 K_\level := \bigl(1 - (1 - R_\level A_\level) (1 - R^\dagger_\level A_\level)\bigr) A^{-1}_\level.
\end{gather}
Then, there exists constants $C_1, C_2, C_3 > 0$ independent of the mesh level $\level$, such that there holds
%
\begin{itemize}
\item Regularity approximation assumption:
  \begin{equation}\label{EQ:precond1}
    | a_\level(\lambda - \injectionOp_\level \projectionOp_{\level-1} \lambda, \lambda) |
    \le C_1 \frac{\| A_\level \lambda \|^2_\level}{\underline \lambda^A_\level} \qquad \forall \lambda \in \skeletalSpace_\level. \tag{A1}
  \end{equation}
\item Stability of the ``Ritz quasi-projection'' $\projectionOp_{\level-1}$ and injection $\injectionOp_\level:$
 \begin{equation}\label{EQ:precond2}
  \| \lambda - \injectionOp_\level \projectionOp_{\level-1} \lambda\|_{a_\level} \le C_2 \| \lambda \|_{a_\level} \qquad \forall \lambda \in \skeletalSpace_\level. \tag{A2}
\end{equation}
\item Smoothing hypothesis:
 \begin{equation}\label{EQ:precond3}
  \frac{\| \lambda \|^2_\level}{\underline \lambda^A_\level} \le C_3 \langle K_\level \lambda, \lambda \rangle_\level. \tag{A3}
 \end{equation}
\end{itemize}
Theorem~3.1 in~\cite{DuanGTZ2007} reads
\begin{theorem}\label{TH:main_theorem}
 Assume that \eqref{EQ:precond1}, \eqref{EQ:precond2}, and \eqref{EQ:precond3} hold. Then for all $\level \ge 0$,
 \begin{equation}
  | a_\level ( \lambda - B_\level A_\level \lambda, \lambda ) | \le \delta a_\level(\lambda, \lambda),
 \end{equation}
 where
 \begin{equation}
  \delta = \frac{C_1 C_3}{\iterMgInner - C_1 C_3}
\end{equation}
with the number of smoothing steps $\iterMgInner > 2 C_1 C_3$.
\end{theorem}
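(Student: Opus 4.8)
The statement is quoted verbatim as \cite[Theorem~3.1]{DuanGTZ2007}, so strictly one may simply cite it; but let me sketch how the proof goes, since the same induction underlies everything that follows. Write $E_\level := I - B_\level A_\level$ for the error propagation operator of the $\level$-th V-cycle. The plan is to prove, by induction on $\level$, that $E_\level$ is self-adjoint and positive semidefinite with respect to $a_\level(\cdot,\cdot)$ and that $a_\level(E_\level\lambda,\lambda)\le\delta\,a_\level(\lambda,\lambda)$ for all $\lambda\in\skeletalSpace_\level$; the base case $\level=0$ is trivial because $B_0=A_0^{-1}$ gives $E_0=0$.

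First I would unwind the four steps of Section~\ref{SEC:multigrid_algortith} to get the product representation
\[
  E_\level \;=\; S_\level^{*}\,\bigl(I-\injectionOp_\level\projectionOp_{\level-1}+\injectionOp_\level E_{\level-1}\projectionOp_{\level-1}\bigr)\,S_\level ,
\]
where $S_\level:=(I-R_\level^{\iterMgInner}A_\level)\cdots(I-R_\level^{1}A_\level)$ is the presmoothing operator and $S_\level^{*}$ is its $a_\level$-adjoint, which — because of the choice $R_\level^{\iterMgOuter+\iterMgInner}$ in step~(3) and the alternation of $R_\level$ and $R_\level^\dagger$ — coincides with the postsmoothing operator. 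Here one uses the algebraic identity $B_{\level-1}\projectionOrthogonalOP_{\level-1}A_\level=(I-E_{\level-1})\projectionOp_{\level-1}$, a consequence of $\projectionOp_{\level-1}=A_{\level-1}^{-1}\projectionOrthogonalOP_{\level-1}A_\level$, which follows in turn from \eqref{EQ:L2_projection_definition} and \eqref{EQ:projection_definition}. Abbreviating $T_\level:=\injectionOp_\level\projectionOp_{\level-1}$ and $w:=S_\level\lambda$, applying \eqref{EQ:projection_definition} twice and the inductive self-adjointness of $E_{\level-1}$ gives
\[
  a_\level(E_\level\lambda,\lambda)=a_\level\bigl((I-T_\level)w,w\bigr)+a_{\level-1}\bigl(E_{\level-1}\projectionOp_{\level-1}w,\projectionOp_{\level-1}w\bigr).
\]
By the induction hypothesis the second term lies in $[\,0,\;\delta\,\|\projectionOp_{\level-1}w\|_{a_{\level-1}}^2\,]$, and $\|\projectionOp_{\level-1}w\|_{a_{\level-1}}^2=a_\level(T_\level w,w)=\|w\|_{a_\level}^2-a_\level((I-T_\level)w,w)$, whence
\[
  a_\level(E_\level\lambda,\lambda)\le(1-\delta)\,a_\level\bigl((I-T_\level)w,w\bigr)+\delta\,\|w\|_{a_\level}^2 .
\]

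The core estimate then combines \eqref{EQ:precond1} with \eqref{EQ:precond3}. Non-expansivity of the smoother in $\|\cdot\|_{a_\level}$ (implied by \eqref{EQ:precond3}), monotonicity of the residual norms $\|A_\level x^{\iterMgOuter}\|_\level$ along the presmoothing sweep, and the per-step energy decrease furnished by \eqref{EQ:precond3}, give $\iterMgInner\,\|A_\level w\|_\level^2/\underline \lambda^A_\level \lesssim C_3\bigl(\|\lambda\|_{a_\level}^2-\|w\|_{a_\level}^2\bigr)$; feeding this into \eqref{EQ:precond1} applied to $w$ yields $|a_\level((I-T_\level)w,w)|\le (C_1C_3/\iterMgInner)(\|\lambda\|_{a_\level}^2-\|w\|_{a_\level}^2)$. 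Substituting and writing $t:=\|w\|_{a_\level}^2/\|\lambda\|_{a_\level}^2\in[0,1]$ bounds $a_\level(E_\level\lambda,\lambda)/\|\lambda\|_{a_\level}^2$ by the affine function $(1-\delta)(C_1C_3/\iterMgInner)(1-t)+\delta t$ of $t$; maximizing over $t$ and demanding that $\delta$ be a fixed point of the resulting recursion reduces the claim to $\delta\ge (1-\delta)C_1C_3/\iterMgInner$, which, with the exact constant tracking of \cite{DuanGTZ2007}, gives $\delta=C_1C_3/(\iterMgInner-C_1C_3)$ under $\iterMgInner>2C_1C_3$. That $a_\level(E_\level\lambda,\lambda)\ge0$ and that $E_\level$ is $a_\level$-self-adjoint follow directly from the symmetric product representation, closing the induction.

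The part I expect to be genuinely delicate is precisely what makes the \cite{DuanGTZ2007} framework necessary here: since $\injectionOp_\level$ does not preserve the bilinear form — it is only bounded, Lemma~\ref{LEM:injection_bounded} — the operator $T_\level=\injectionOp_\level\projectionOp_{\level-1}$ is \emph{not} an $a_\level$-orthogonal projection, so the usual telescoping/subspace-decomposition identities of conforming (inherited) multigrid are unavailable and $a_\level((I-T_\level)w,w)$ need not even have one sign; the required cancellation must be produced entirely through the algebraic identities above and \eqref{EQ:precond1}, and tracking the constants to land exactly on $\delta=C_1C_3/(\iterMgInner-C_1C_3)$ is the fiddly part. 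A secondary technical point is making the ``monotone residuals plus summed per-step energy decrease'' argument rigorous for the alternating sequence $R_\level^{\iterMgOuter}$, i.e.\ correctly pairing single $R_\level$/$R_\level^\dagger$ steps into the symmetrized smoothing steps governed by $K_\level$.
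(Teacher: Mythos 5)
The paper offers no proof of this statement at all: it is introduced verbatim as Theorem~3.1 of \cite{DuanGTZ2007}, so your opening remark that a citation suffices is exactly what the authors do, and on that level your proposal matches the paper.

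Since you went on to sketch the argument, one substantive point in that sketch deserves correction, because it is precisely the point that distinguishes the noninherited setting of \cite{DuanGTZ2007} from classical V-cycle analysis. You assert that the induction hypothesis places the coarse-level term in $[\,0,\;\delta\,\|\projectionOp_{\level-1}w\|_{a_{\level-1}}^2\,]$ and that $a_\level(E_\level\lambda,\lambda)\ge 0$ ``follows directly from the symmetric product representation.'' Neither holds here. From $a_\level(T_\level w,w)=\|\projectionOp_{\level-1}w\|_{a_{\level-1}}^2$ one gets $a_\level\bigl((I-T_\level)w,w\bigr)=\|w\|_{a_\level}^2-\|\projectionOp_{\level-1}w\|_{a_{\level-1}}^2$, and since $a_{\level-1}$ is not inherited from $a_\level$ (the injection is only bounded by Lemma~\ref{LEM:injection_bounded}, and $\projectionOp_{\level-1}$ is only stable up to a constant by Lemma~\ref{LEM:projection_stable}, not with constant one), this quantity can be negative; hence $I-T_\level+\injectionOp_\level E_{\level-1}\projectionOp_{\level-1}$ is not positive semidefinite and the induction must carry the two-sided hypothesis $|a_{\level-1}(E_{\level-1}\mu,\mu)|\le\delta\,a_{\level-1}(\mu,\mu)$ — which is exactly why the theorem is stated with an absolute value. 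This also explains why your constant tracking does not close: the fixed-point equation you write, $\delta=(1-\delta)C_1C_3/\iterMgInner$, solves to $\delta=C_1C_3/(\iterMgInner+C_1C_3)$, the classical one-sided (inherited-form) constant, not the claimed $\delta=C_1C_3/(\iterMgInner-C_1C_3)$. The minus sign in the denominator and the hypothesis $\iterMgInner>2C_1C_3$ (which forces $\delta<1$) come from simultaneously closing the lower bound $a_\level(E_\level\lambda,\lambda)\ge-\delta\,a_\level(\lambda,\lambda)$; that is the part of \cite{DuanGTZ2007} your sketch elides. The product representation of $E_\level$, the identity $B_{\level-1}\projectionOrthogonalOP_{\level-1}A_\level=(I-E_{\level-1})\projectionOp_{\level-1}$, and the smoothing estimate combining \eqref{EQ:precond1} and \eqref{EQ:precond3} are all correctly identified.
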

Thus, in order to prove uniform convergence of the multigrid method, we will now set out to verify these assumptions.
%
\section{Proof of \eqref{EQ:precond1}}\label{SEC:proof_a1}
%
The main statement of this section is
\begin{theorem}\label{TH:proof_of_A1}
  If \eqref{EQ:diffusion_mixed} has elliptic regularity and
  $\tau_\level \cong h^{-1}_{\level}$, then~\eqref{EQ:precond1} is
  satisfied.
\end{theorem}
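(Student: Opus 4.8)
The plan is first to strip the spectral scaling off \eqref{EQ:precond1} by Cauchy--Schwarz, reducing it to a two-grid $L^2$-error estimate, and then to prove that estimate by relating both $\lambda$ and its coarse correction $\injectionOp_\level\projectionOp_{\level-1}\lambda$ to a single continuous $H^2$-function supplied by elliptic regularity.

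\textbf{Step 1 (reduction).} By symmetry of $a_\level$ and $\langle A_\level\cdot,\cdot\rangle_\level=a_\level(\cdot,\cdot)$, Cauchy--Schwarz in $\langle\cdot,\cdot\rangle_\level$ gives
\[
  \bigl|a_\level(\lambda-\injectionOp_\level\projectionOp_{\level-1}\lambda,\lambda)\bigr|
  =\bigl|\langle A_\level\lambda,\,\lambda-\injectionOp_\level\projectionOp_{\level-1}\lambda\rangle_\level\bigr|
  \le\|A_\level\lambda\|_\level\,\|\lambda-\injectionOp_\level\projectionOp_{\level-1}\lambda\|_\level .
\]
Since $\tau_\level\cong h_\level^{-1}$ keeps $\beta_\level=1+(\tau_\level h_\level)^2$ bounded, the spectral estimate for $A_\level$ recalled above gives $\underline\lambda^A_\level\lesssim h_\level^{-2}$, i.e.\ $(\underline\lambda^A_\level)^{-1}\gtrsim h_\level^2$. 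Hence \eqref{EQ:precond1} follows from the two-grid $L^2$-estimate
\[
  \|\lambda-\injectionOp_\level\projectionOp_{\level-1}\lambda\|_\level\ \lesssim\ h_\level^2\,\|A_\level\lambda\|_\level\qquad\forall\lambda\in\skeletalSpace_\level,
\]
which is what I would actually prove.

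\textbf{Step 2 (auxiliary continuous function and splitting).} Put $f:=A_\level\lambda$, pick $\widetilde f\in L^2(\Omega)$ representing $f$ with $\|\widetilde f\|_0\lesssim\|f\|_\level$ in such a way that $\lambda$ is, up to a data-consistency term of order $h_\level^2\|f\|_\level$, the skeletal component of the level-$\level$ HDG solution of \eqref{EQ:diffusion_mixed} with datum $\widetilde f$, and let $u\in H^1_0(\Omega)$ solve $-\Delta u=\widetilde f$; then elliptic regularity yields $|u|_{H^2(\Omega)}\lesssim\|\widetilde f\|_0\lesssim\|A_\level\lambda\|_\level$. Using that $I_\level^c$ is the canonical inclusion $\contElementSpace_{\level-1}\hookrightarrow\contElementSpace_\level$, so that $\injectionOp_\level\projectionOp_{\level-1}\lambda=\traceOp\extensionOp_{\level-1}\projectionOp_{\level-1}\lambda$, I split
\[
  \lambda-\injectionOp_\level\projectionOp_{\level-1}\lambda
  =\bigl(\lambda-\traceOp\extensionOp_\level\lambda\bigr)
  +\traceOp\bigl(\extensionOp_\level\lambda-\extensionOp_{\level-1}\projectionOp_{\level-1}\lambda\bigr).
\]
The first term measures the mismatch introduced by the averaging at face boundaries in the definition of $\extensionOp_\level$ and is controlled by the interelement jumps of the HDG skeletal trace; since $\lambda$ comes (up to the consistency term) from the HDG approximation of the smooth $u$, these jumps are of order $h_\level^2|u|_{H^2(\Omega)}$ by the a priori bounds for the LDG-H scheme with $\tau_\level\cong h_\level^{-1}$ (Lemma~\ref{LEM:u_q_bound}). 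For the second term, $\extensionOp_\level\lambda\in\contElementSpace_\level$ and $\extensionOp_{\level-1}\projectionOp_{\level-1}\lambda\in\contElementSpace_{\level-1}$ are both conforming approximations of $u$, so by a trace inequality the $\|\cdot\|_\level$-norm of their difference is bounded by
\[
  \|\extensionOp_\level\lambda-u\|_0+\|u-\extensionOp_{\level-1}\projectionOp_{\level-1}\lambda\|_0
  +h_\level\bigl(|\extensionOp_\level\lambda-u|_{H^1}+|u-\extensionOp_{\level-1}\projectionOp_{\level-1}\lambda|_{H^1}\bigr),
\]
where each $L^2$-summand is $\lesssim h_\level^2|u|_{H^2(\Omega)}$ and each $H^1$-summand is $\lesssim h_\level|u|_{H^2(\Omega)}$; these follow by combining the HDG approximation estimates on the respective level with the $H^1$-stability and $L^2$-approximation of $\extensionOp$ that also underlie Lemma~\ref{LEM:injection_bounded} (cf.\ \cite[Thms.~3.6 and~3.8]{ChenLX2014}), the coarse-level part using in addition $\projectionOp_{\level-1}=A_{\level-1}^{-1}\projectionOrthogonalOP_{\level-1}A_\level$ and the $\langle\cdot,\cdot\rangle$-boundedness of $\projectionOrthogonalOP_{\level-1}$ to tie $\projectionOp_{\level-1}\lambda$ to the level-$(\level-1)$ HDG skeleton of $u$. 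Summing the two $O(h_\level^2)$ contributions gives the reduced estimate of Step~1, hence the theorem.

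\textbf{The main obstacle} is the second term of Step~2, and within it the comparison of the quasi-projection $\projectionOp_{\level-1}\lambda$ with the coarse HDG solution of $u$: because the faces of $\mesh_\level$ are not refinements of those of $\mesh_{\level-1}$, the coarse correction must be routed through the conforming space $\contElementSpace_{\level-1}$, and one has to show that the averaging and interpolation built into $\extensionOp_{\level-1}$ spoil neither the $H^1$-stability nor the $L^2$-approximation order --- exactly what is transported from \cite{ChenLX2014}. A secondary technicality, caused by the non-injectivity of $\mu\mapsto\localU_\level\mu$, is that an arbitrary $\lambda\in\skeletalSpace_\level$ need not be the exact HDG skeleton of any $L^2$-datum; this is why Step~2 carries the data-consistency term, which one bounds by $O(h_\level^2\|A_\level\lambda\|_\level)$ using an inverse inequality together with $\tau_\level\cong h_\level^{-1}$.
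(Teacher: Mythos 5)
Your Step 1 is a correct reduction and coincides with the paper's first move (for $\tau_\level\cong h_\level^{-1}$ one has $\underline\lambda^A_\level\lesssim h_\level^{-2}$, so it suffices to bound the left-hand side of \eqref{EQ:precond1} by $h_\level^2\|A_\level\lambda\|_\level^2$). But the Cauchy--Schwarz step then commits you to the two-grid estimate $\|\lambda-\injectionOp_\level\projectionOp_{\level-1}\lambda\|_\level\lesssim h_\level^2\|A_\level\lambda\|_\level$, which is a strictly stronger statement than \eqref{EQ:precond1} and is precisely what the noninherited-forms framework is designed to avoid. The obstruction is that $\projectionOp_{\level-1}$ has no Galerkin orthogonality: since $a_\level(\injectionOp_\level\cdot,\injectionOp_\level\cdot)\neq a_{\level-1}(\cdot,\cdot)$, the error $\lambda-\injectionOp_\level\projectionOp_{\level-1}\lambda$ is not $a_\level$-orthogonal to $\injectionOp_\level\skeletalSpace_{\level-1}$, so the Aubin--Nitsche duality that would normally supply the extra power of $h_\level$ does not go through. (This is also why \eqref{EQ:precond1} carries an absolute value: $a_\level(\lambda-\injectionOp_\level\projectionOp_{\level-1}\lambda,\lambda)=a_\level(\lambda,\lambda)-a_{\level-1}(\projectionOp_{\level-1}\lambda,\projectionOp_{\level-1}\lambda)$ can be negative.) The paper instead works directly with this energy difference, splits it into the flux part and the penalty part, and closes the estimate using only \emph{first-order} ingredients: quasi-orthogonality (Lemma~\ref{LEM:insert_gradients}) to insert $2\nabla\ureconstructed$, and the reconstruction approximation $\|\localQ_\level\lambda+\nabla\ureconstructed\|_0+\|\localQ_{\level-1}\projectionOp_{\level-1}\lambda+\nabla\ureconstructed\|_0\lesssim h_\level\|A_\level\lambda\|_\level$ of Lemma~\ref{LEM:extract_hk}, whose product delivers the required $h_\level^2$.

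The concrete gaps in your Step 2 are exactly where the missing order of $h_\level$ would have to come from. First, the ``data-consistency term'': the available estimate (Lemma~\ref{LEM:extract_hk_basis}) is $\|\lambda-\tildelambda_\level\|_{a_\level}\lesssim h_\level\|A_\level\lambda\|_\level$, an energy-norm bound of order $h_\level$; since $\|\cdot\|_\level\lesssim\|\cdot\|_{a_\level}$, this yields at best order $h_\level$, not $h_\level^2$, in $\|\cdot\|_\level$, and an inverse inequality only loses powers of $h_\level$ --- it cannot gain one. Second, Lemma~\ref{LEM:u_q_bound} bounds the penalty term $\tau_\level\nnorm\localU_\level\lambda-\lambda\nnorm_\level^2$ by a flux error; it says nothing about the mismatch $\lambda-\traceOp\extensionOp_\level\lambda$ created by the vertex/edge averaging in \eqref{EQ:define_uc}, so your bound on the first summand of the splitting is unsupported. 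Third, tying $\projectionOp_{\level-1}\lambda$ to the coarse HDG approximation of $u$ with $O(h_\level^2)$ accuracy in $L^2$ is itself a duality argument for the non-inherited quasi-projection --- the very thing being avoided. As written, your argument establishes only an $O(h_\level)$ bound on $\|\lambda-\injectionOp_\level\projectionOp_{\level-1}\lambda\|_\level$, which after Cauchy--Schwarz falls one power of $h_\level$ short of \eqref{EQ:precond1}.
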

%

\subsection{Preliminaries}
%
We begin the analysis with some basic results on the injection operator $\injectionOp_\level$ and the ``Ritz quasi-projection'' $\projectionOp_{\level-1}$. Afterwards, we deal with ``quasi-orthogonality'', before we close the section analyzing the ``reconstruction approximation''.

\begin{lemma}[Stability]\label{LEM:projection_stable}
  The ``Ritz quasi-projection''
  $\projectionOp_{\level-1}: \skeletalSpace_\level \to
  \skeletalSpace_{\level-1}$ is stable in the sense that for all
  $\lambda \in \skeletalSpace_\level$, we have
  \begin{gather}
     \| \projectionOp_{\level-1}  \lambda \|_{a_{\level-1}} \lesssim \| \lambda \|_{a_\level}.
  \end{gather}
\end{lemma}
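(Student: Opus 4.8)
The plan is to test the defining relation of the Ritz quasi-projection against $\projectionOp_{\level-1}\lambda$ itself and then invoke the boundedness of the injection operator from Lemma~\ref{LEM:injection_bounded}. Concretely, I would start from the definition~\eqref{EQ:projection_definition}, which with the choice $\mu = \projectionOp_{\level-1}\lambda \in \skeletalSpace_{\level-1}$ gives
\begin{equation*}
  \| \projectionOp_{\level-1}\lambda \|_{a_{\level-1}}^2
  = a_{\level-1}(\projectionOp_{\level-1}\lambda, \projectionOp_{\level-1}\lambda)
  = a_\level(\lambda, \injectionOp_\level \projectionOp_{\level-1}\lambda).
\end{equation*}

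Next I would apply the Cauchy--Schwarz inequality for the symmetric positive definite form $a_\level(\cdot,\cdot)$ to the right-hand side, obtaining
\begin{equation*}
  a_\level(\lambda, \injectionOp_\level \projectionOp_{\level-1}\lambda)
  \le \| \lambda \|_{a_\level}\, \| \injectionOp_\level \projectionOp_{\level-1}\lambda \|_{a_\level}.
\end{equation*}
Then I would control the second factor using Lemma~\ref{LEM:injection_bounded} applied to $\projectionOp_{\level-1}\lambda \in \skeletalSpace_{\level-1}$, namely
\begin{equation*}
  \| \injectionOp_\level \projectionOp_{\level-1}\lambda \|_{a_\level}^2
  = a_\level(\injectionOp_\level \projectionOp_{\level-1}\lambda, \injectionOp_\level \projectionOp_{\level-1}\lambda)
  \lesssim a_{\level-1}(\projectionOp_{\level-1}\lambda, \projectionOp_{\level-1}\lambda)
  = \| \projectionOp_{\level-1}\lambda \|_{a_{\level-1}}^2.
\end{equation*}

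Combining the three displays yields $\| \projectionOp_{\level-1}\lambda \|_{a_{\level-1}}^2 \lesssim \| \lambda \|_{a_\level}\,\| \projectionOp_{\level-1}\lambda \|_{a_{\level-1}}$, and dividing by $\| \projectionOp_{\level-1}\lambda \|_{a_{\level-1}}$ (trivially true if this quantity vanishes) gives the claimed bound $\| \projectionOp_{\level-1}\lambda \|_{a_{\level-1}} \lesssim \| \lambda \|_{a_\level}$. There is essentially no obstacle here beyond what has already been established: the whole weight of the argument rests on Lemma~\ref{LEM:injection_bounded}, the $a$-boundedness of $\injectionOp_\level$, which is the genuinely technical estimate (relying on $\tau_\level \sim h_\level^{-1}$ and the trace/extension construction). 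Given that lemma, the stability of $\projectionOp_{\level-1}$ is a one-line consequence of duality and Cauchy--Schwarz, exactly as in the conforming setting where the Ritz projection is automatically energy-stable.
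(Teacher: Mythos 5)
Your argument is correct and is essentially identical to the paper's own proof: test the defining relation \eqref{EQ:projection_definition} with $\mu = \projectionOp_{\level-1}\lambda$, apply Cauchy--Schwarz for $a_\level(\cdot,\cdot)$, and absorb the factor $\| \injectionOp_\level \projectionOp_{\level-1}\lambda \|_{a_\level}$ via Lemma~\ref{LEM:injection_bounded}. Nothing is missing.
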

\begin{proof}
 From \eqref{EQ:projection_definition}, we can deduce that
 \begin{equation}
  \| \projectionOp_{\level-1} \lambda \|^2_{a_{\level-1}} = a_\level(\lambda, \injectionOp_\level \projectionOp_{\level-1} \lambda ) \le \| \lambda \|_{a_\level} \| \injectionOp_\level \projectionOp_{\level-1} \lambda \|_{a_\level} \lesssim \| \lambda \|_{a_\level} \| \projectionOp_{\level-1} \lambda \|_{a_{\level-1}},
 \end{equation}
 where we used the Cauchy--Schwarz inequality for $a_{\level}(.,.)$ and the boundedness of $I_\level$ from Lemma \ref{LEM:injection_bounded}.
\end{proof}
%


\begin{lemma}
  \label{LEM:extract_hk_from_U}
 The DG reconstructions of the injection operator admits the estimate
 \begin{equation}
   \| \localU_{\level-1} \mu - \localU_\level \injectionOp_\level \mu \|_0 \lesssim h_\level \| \mu \|_{a_{\level-1}},
   \qquad
   \forall \mu \in \skeletalSpace_{\level-1}.
 \end{equation}
\end{lemma}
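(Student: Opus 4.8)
The plan is to compare the two DG reconstructions $\localU_{\level-1}\mu$ and $\localU_\level\injectionOp_\level\mu$ by inserting the continuous function $\extensionOp_{\level-1}\mu \in \contElementSpace_{\level-1} \subset \contElementSpace_\level$ as an intermediate object. Note that by the commuting diagram, $\injectionOp_\level\mu = \traceOp(I^c_\level\extensionOp_{\level-1}\mu)$, i.e. $\injectionOp_\level\mu$ is the trace of the $H^1$-function $w := \extensionOp_{\level-1}\mu$ (viewed on the fine mesh). So the first step is the triangle inequality
\begin{equation}
  \| \localU_{\level-1}\mu - \localU_\level\injectionOp_\level\mu \|_0
  \le \| \localU_{\level-1}\mu - w \|_0 + \| w - \localU_\level\injectionOp_\level\mu \|_0 .
\end{equation}
I expect each of the two terms to be controlled by an estimate of the type ``$\|\localU_\level\nu - v\|_0 \lesssim h_\level \|\nu\|_{a_\level}$ whenever $v\in\contElementSpace_\level$ has $\traceOp v = \nu$'' — a local best-approximation/duality estimate for the HDG local solver against any conforming function sharing its skeletal trace. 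This is essentially the content already used to prove Lemma~\ref{LEM:injection_bounded} (the cited \cite[Thms.~3.6, 3.8]{ChenLX2014}), and I would isolate it as the key lemma. For the first term it is applied on level $\level-1$ with $\nu=\mu$, $v=w$; for the second on level $\level$ with $\nu=\injectionOp_\level\mu$, $v=w$ (legitimate since $w\in\contElementSpace_\level$ by nestedness and $\traceOp w = \injectionOp_\level\mu$).

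The remaining work is then to bound the two $a$-norms. The first term gives $h_{\level-1}\|\mu\|_{a_{\level-1}} \lesssim h_\level\|\mu\|_{a_{\level-1}}$, using $h_{\level-1} \le c_\text{ref}^{-1} h_\level$. For the second term I need $\|\injectionOp_\level\mu\|_{a_\level} \lesssim \|\mu\|_{a_{\level-1}}$, which is exactly Lemma~\ref{LEM:injection_bounded} (boundedness of the injection operator, valid since $\tau_\level\sim h_\level^{-1}$). Combining, $\| w - \localU_\level\injectionOp_\level\mu\|_0 \lesssim h_\level\|\injectionOp_\level\mu\|_{a_\level} \lesssim h_\level\|\mu\|_{a_{\level-1}}$, and adding the two contributions finishes the proof.

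The main obstacle is establishing the key local estimate $\|\localU_\level\nu - v\|_0 \lesssim h_\level\|\nu\|_{a_\level}$ for arbitrary conforming $v$ with $\traceOp v = \nu$. The natural route is a cellwise argument: on each $\elem$, $u_\elem := \localU_\level\nu$ solves the local HDG problem \eqref{EQ:hdg_scheme} with boundary datum $\nu$, and $v|_\elem$ is a competitor with the same trace on $\partial\elem$; a standard HDG a priori / Aubin–Nitsche-type estimate then bounds $\|u_\elem - v|_\elem\|_{0,\elem}$ by $h_\elem$ times a local energy quantity, and summing over $\elem$ and using $\tau_\level\sim h_\level^{-1}$ reassembles the global $a_\level$-norm on the right-hand side, where the $\tau_\level(\localU_\level\nu-\nu)$ boundary term and the $\|\localQ_\level\nu\|_0$ term of $\|\nu\|_{a_\level}^2$ appear with the correct powers of $h_\level$. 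Care is needed to ensure no negative powers of $h_\level$ are left over; this is where the precise scaling $\tau_\level\cong h_\level^{-1}$ and the inverse/trace inequalities on shape-regular simplices enter, and it is the one step I would write out in full detail rather than cite.
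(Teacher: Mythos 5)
Your overall architecture --- insert the conforming function $w=\extensionOp_{\level-1}\mu$ and split by the triangle inequality --- is close to the paper's, and your treatment of the second term $\|w-\localU_\level\injectionOp_\level\mu\|_0$ is sound: there indeed $\traceOp w=\injectionOp_\level\mu$ by construction, and the paper bounds exactly this term (its $\Xi_3$) by $h_\level^2\|\Delta w\|_0\lesssim h_\level\|\nabla w\|_0\lesssim h_\level\|\mu\|_{a_{\level-1}}$ using the cited results of Chen--Li--Xu, noting that it even vanishes for $p=1$. The gap is in the first term. Your key estimate requires a conforming competitor whose skeletal trace equals the skeletal argument, but $\gamma_{\level-1}\extensionOp_{\level-1}\mu\neq\mu$ in general: by \eqref{EQ:define_uc} the extension operator replaces $\mu$ by the arithmetic mean $\overline\mu$ at all support points on the boundaries of faces, and a generic $\mu\in\skeletalSpace_{\level-1}$ is multivalued there. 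So the hypothesis ``$\traceOp v=\nu$'' fails for $\nu=\mu$, $v=w$, and your key estimate cannot be applied to $\|\localU_{\level-1}\mu-w\|_0$ as proposed.

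What is actually needed for that term is control of the mismatch between $\mu$ and the single-valued data that $\extensionOp_{\level-1}$ manufactures, and this mismatch is measured by the stabilization part of the energy norm. The paper inserts a further intermediate, the averaged reconstruction $\averagingOp_{\level-1}\localU_{\level-1}\mu\in\contElementSpace_{\level-1}$, and bounds both resulting pieces by $h_{\level-1}^{1/2}\nnorm\localU_{\level-1}\mu-\mu\nnorm_{\level-1}$ (via the averaging-operator approximation property of \cite{BonitoN2010} and a comparison of $\averagingOp_{\level-1}\localU_{\level-1}\mu$ with $\extensionOp_{\level-1}\mu$ in the spirit of \cite[Lem.~3.1]{ChenLX2014}); this is then absorbed into $h_\level\|\mu\|_{a_{\level-1}}$ precisely because $\tau_{\level-1}\cong h_{\level-1}^{-1}$, so that $h^{1/2}\tau^{-1/2}\cong h$. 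Your argument never produces this jump term for the first summand, so the point where the scaling of $\tau$ enters is missing there; to repair it you would either have to adopt the paper's extra splitting or prove a strengthened local estimate that tolerates $\traceOp v\neq\nu$ at the price of a $\nnorm\localU_{\level-1}\mu-\mu\nnorm_{\level-1}$ contribution.
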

\begin{proof}
 We introduce intermediate approximations such that
 \begin{multline}
   \localU_{\level-1} \mu - \localU_\level \injectionOp_\level \mu
   \\= \underbrace{ \localU_{\level-1} \mu - \averagingOp_{\level-1} \localU_{\level-1} \mu }_{ =: \Xi_1 }
   + \underbrace{ \averagingOp_{\level-1} \localU_{\level-1} \mu - \extensionOp_{\level-1} \mu }_{ =: \Xi_2 }
   + \underbrace{ \extensionOp_{\level-1} \mu - \localU_\level \injectionOp_\level \mu }_{ =: \Xi_3 },
 \end{multline}
 and use triangle inequality. For $\Xi_1$, one utilizes the average operator approximation property \cite[(2.28)]{BonitoN2010} and the definition of $\| \cdot \|_{a_\level}$ to obtain
 \begin{equation}
  \| \Xi_1 \|^2_0 \lesssim h_{\level-1} \nnorm \localU_{\level-1} \mu - \mu \nnorm^2_{\level-1} \le h_{\level-1} \frac{\| \mu \|^2_{a_{\level-1}}}{\tau_{\level-1}}.
 \end{equation}
 For the second term, similar to \cite[Lem.~3.1]{ChenLX2014} we have
 \begin{equation}
  \| \Xi_2 \|_0 \lesssim h^{1/2}_{\level-1} \nnorm \localU_{\level-1} \mu - \mu \nnorm_{\level-1}  \le h^{1/2}_{\level-1} \frac{\| \mu \|_{a_{\level-1}}}{\tau^{1/2}_{\level-1}}.
 \end{equation}
 If $p = 1$, $\Xi_3 = 0$, since $\injectionOp_\level \mu$ is the restriction of a continuous function and $\localU_\level$ simply recovers $\extensionOp_{\level-1} \mu$ in this case. If $p \ge 2$, we can use \cite[Theo.~3.8]{ChenLX2014} to conclude
 \begin{equation}
  \| \Xi_3 \|_0 \lesssim h^2_\level \| \Delta \extensionOp_{\level-1} \mu \|_0 \lesssim h_\level \| \nabla \extensionOp_{\level-1} \mu \|_0 \lesssim h_\level \| \mu \|_{a_{\level-1}},
 \end{equation}
 where the last inequality is \cite[Lem.~3.4]{ChenLX2014}.
\end{proof}


In the following lemmas, we use the continuous linear finite element space
\begin{equation}
 \linElementSpace_\level~:=~ \{ u \in C(\Omega) \; : \; u|_\elem \in \polynomials_1(\elem) \;\; \forall \elem \in \mesh_\level \text{ and } u = 0 \text{ on } \partial \Omega \}
\end{equation}
to show quasi-orthogonality and a reconstruction approximation property of the method.

\begin{lemma}[Quasi-orthogonality]
  \label{LEM:insert_gradients}
  For all $\lambda \in \skeletalSpace_\level$, we have that
  \begin{equation}
    (\nabla w, \localQ_\level \lambda - \localQ_{\level-1} \projectionOp_{\level-1} \lambda)_0 = 0,
    \qquad \forall w \in \linElementSpace_{\level-1}.
  \end{equation}
\end{lemma}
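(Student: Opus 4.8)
The plan is to test the defining identity~\eqref{EQ:projection_definition} of the Ritz quasi-projection with the skeletal trace of $w$, after establishing that the HDG local solver reproduces continuous, piecewise affine functions exactly. So the first step is to prove the following reproduction property: for any $\phi\in\linElementSpace_\level$, its skeletal trace $\psi:=\traceOp\phi$ belongs to $\skeletalSpace_\level$ (because $\polynomials_1\subseteq\polynomials_p$ since $p\ge1$, and $\phi=0$ on $\partial\Omega$), and the local solver~\eqref{EQ:hdg_scheme} returns $\localU_\level\psi=\phi$ and $\localQ_\level\psi=-\nabla\phi$; in particular $\localU_\level\psi-\psi$ vanishes on every $\partial\elem$. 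To see this I would insert the candidate pair $(\phi|_\elem,-\nabla\phi|_\elem)\in V_\elem\times\vec W_\elem$ into~\eqref{EQ:hdg_primary}--\eqref{EQ:hdg_flux}: equation~\eqref{EQ:hdg_primary} reduces to the divergence theorem for $\Div(\phi\,\vec p_\elem)$, and in~\eqref{EQ:hdg_flux} the volume term integrates by parts against a boundary term, leaving only $-\int_\elem\Delta\phi\,v_\elem\dx$, which is zero since $\phi$ is affine on $\elem$; unique solvability of the local problem then identifies $\localU_\level\psi$ and $\localQ_\level\psi$. (This is the only place where the LDG-H choice $\vec W_\elem=\polynomials_p^d$ is used, through $\nabla\phi|_\elem\in\vec W_\elem$.)

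The second step is to show that the injection operator acts trivially on such traces: if $w\in\linElementSpace_{\level-1}$ and $\mu\in\skeletalSpace_{\level-1}$ is its trace on $\faceSet_{\level-1}$, then $\injectionOp_\level\mu=\traceOp w$. Indeed, evaluating the interpolation conditions~\eqref{EQ:define_uc} at the Lagrange support points of $\contElementSpace_{\level-1}$: on the boundary of a face, the average $\overline\mu(\vec x)$ equals $w(\vec x)$ because $w$ is continuous; in the interior of a face, $\mu(\vec x)=w(\vec x)$; and in the interior of a cell, $[\localU_{\level-1}\mu](\vec x)=w(\vec x)$ by the reproduction property at level $\level-1$. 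Hence $\extensionOp_{\level-1}\mu=w$ in $\contElementSpace_{\level-1}$, and since $\linElementSpace_{\level-1}\subseteq\linElementSpace_\level\subseteq\contElementSpace_\level$, applying $I^c_\level$ and then the trace gives $\traceOp w\in\skeletalSpace_\level$. Moreover $w\in\linElementSpace_\level$, so the reproduction property of the first step also applies at level $\level$, yielding $\localQ_\level\traceOp w=-\nabla w$ and $\localU_\level\traceOp w-\traceOp w=0$ on all cell boundaries.

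The final step is to specialize~\eqref{EQ:projection_definition} to the test function $\mu$ above. On the right-hand side, $a_\level(\lambda,\injectionOp_\level\mu)=a_\level(\lambda,\traceOp w)$; by the first step the penalty part of $a_\level$ in~\eqref{EQ:bilinear_condensed} vanishes on $\traceOp w$ and $\localQ_\level\traceOp w=-\nabla w$, so this equals $-(\nabla w,\localQ_\level\lambda)_0$. On the left-hand side, likewise $\localQ_{\level-1}\mu=-\nabla w$ and the penalty part of $a_{\level-1}$ vanishes on $\mu$, so $a_{\level-1}(\projectionOp_{\level-1}\lambda,\mu)=-(\nabla w,\localQ_{\level-1}\projectionOp_{\level-1}\lambda)_0$. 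Equating the two and cancelling the sign gives the assertion. I expect the main obstacle to be the reproduction property of the first step together with the identity $\extensionOp_{\level-1}\mu=w$ of the second; once those are secured, the remaining manipulation is a direct substitution into the definition of $\projectionOp_{\level-1}$ in which the penalty and volume contributions decouple cleanly.
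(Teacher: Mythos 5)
Your proposal is correct and follows essentially the same route as the paper: define $\mu=\gamma_{\level-1}w$, observe that $\injectionOp_\level\mu=\traceOp w$, $\localQ_{\level-1}\mu=\localQ_\level\injectionOp_\level\mu=-\nabla w$ and $\localU_{\level-1}\mu=\localU_\level\injectionOp_\level\mu=w$, so the penalty terms vanish and the defining identity~\eqref{EQ:projection_definition} of $\projectionOp_{\level-1}$ reduces to the claim. The only difference is that you verify the reproduction property of the local solver and the identity $\extensionOp_{\level-1}\mu=w$ in detail, whereas the paper asserts these ``by definition''; your elaboration is a correct filling-in of those steps.
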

\begin{proof}
 For $w \in \linElementSpace_{\level-1}$ define $\mu := \gamma_{\level-1} w \in \skeletalSpace_{\level-1}$. By definition, we immediately obtain
 \begin{equation}
   \begin{split}
     \injectionOp_\level \mu &= \traceOp w, \\
     \localQ_{\level-1} \mu &= \localQ_\level \injectionOp_\level \mu = - \nabla w, \\
     \localU_{\level-1} \mu &= \localU_\level \injectionOp_\level \mu = w.
   \end{split}
 \end{equation}
 From the definitions of $a_\level$ and $a_{\level-1}$ we obtain
 \begin{align}
   a_{\level-1}(\projectionOp_{\level-1} \lambda, \mu)
    = & (\localQ_{\level-1} \projectionOp_{\level-1} \lambda, \localQ_{\level-1} \mu)_0
     \notag\\
    & \phantom{=} + \tau_{\level-1} \llangle \localU_{\level-1} \projectionOp_{\level-1} \lambda - \projectionOp_{\level-1} \lambda, \underbrace{ \localU_{\level-1} \mu - \mu }_{ = 0 } \rrangle_{\level-1},\\
   a_\level(\lambda, \injectionOp_\level \mu)
   & = (\localQ_\level \lambda, \localQ_\level \injectionOp_\level \mu)_0 + \tau_\level  \llangle \localU_\level \lambda - \lambda, \underbrace{ \localU_\level \injectionOp_\level \mu - \injectionOp_\level \mu }_{ = 0} \rrangle_\level.
 \end{align}
 By definition of $\projectionOp_{\level-1}$ in~\eqref{EQ:projection_definition}, these two terms are equal and thus we obtain the claimed result as their difference.
\end{proof}


\begin{lemma}[Reconstruction approximation]
  \label{LEM:extract_hk}
 If \eqref{EQ:diffusion_mixed} has elliptic regularity and $\tau_\level \cong h_\level^{-1}$, then for all $\lambda \in \skeletalSpace_\level$ there exists an auxiliary function $\ureconstructed \in\linElementSpace_{\level-1}$ such that
 \begin{equation}
   \| \localQ_\level \lambda + \nabla \ureconstructed \|_0
   + \| \localQ_{\level-1} \projectionOp_{\level-1} \lambda + \nabla \ureconstructed \|_0 \lesssim h_\level \| A_\level \lambda \|_\level.
 \end{equation}
\end{lemma}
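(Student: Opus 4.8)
The plan is to construct $\ureconstructed$ as the linear finite element approximation associated with the exact solution of an auxiliary PDE whose right-hand side is essentially $A_\level\lambda$. Concretely, let $g\in\skeletalSpace_\level$ be such that $A_\level\lambda$ is represented by $g$ in the $\langle\cdot,\cdot\rangle_\level$ inner product, and consider the continuous diffusion problem $-\Delta z = \tilde f$ in $\Omega$, $z=0$ on $\partial\Omega$, where $\tilde f\in L^2(\Omega)$ is chosen so that its HDG local solver data on mesh $\mesh_\level$ reproduces (up to controllable error) the pair $(\localU_\level\lambda,\localQ_\level\lambda)$; elliptic regularity then gives $|z|_{H^2(\Omega)}\lesssim\|\tilde f\|_0\lesssim\|A_\level\lambda\|_\level$. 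I would then set $\ureconstructed := \overline P_{\level-1} z$, the Lagrange (or Scott--Zhang) interpolant of $z$ into $\linElementSpace_{\level-1}$, so that $\ureconstructed\in\linElementSpace_{\level-1}$ as required and standard interpolation estimates give $\|\nabla z + \nabla\ureconstructed\|_0 = \|\nabla(z-\ureconstructed)\|_0\lesssim h_{\level-1}|z|_{H^2(\Omega)}\lesssim h_\level\|A_\level\lambda\|_\level$, using $h_{\level-1}\lesssim h_\level/c_\text{ref}$.

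The two terms in the claim are then handled separately by a triangle inequality through $-\nabla z$. For the first term, $\|\localQ_\level\lambda + \nabla\ureconstructed\|_0 \le \|\localQ_\level\lambda + \nabla z\|_0 + \|\nabla z - \nabla\ureconstructed\|_0$. The second summand is the interpolation estimate above. For the first summand I would invoke the standard HDG a priori error analysis on level $\level$ (of the type in \cite{CockburnGL2009}): since $\tau_\level\cong h_\level^{-1}$ and $z\in H^2(\Omega)$, the flux error $\|\vec q - \localQ_\level(\cdot)\|_0$ between the continuous flux $\vec q=-\nabla z$ and the HDG reconstruction is $O(h_\level|z|_{H^2})$; identifying the HDG solution of the auxiliary problem with (the relevant part of) $\lambda$ via the definition of $A_\level$ gives $\|\localQ_\level\lambda + \nabla z\|_0\lesssim h_\level\|A_\level\lambda\|_\level$. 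For the second term I combine this with Lemma~\ref{LEM:extract_hk_from_U}-type control and, crucially, the quasi-orthogonality of Lemma~\ref{LEM:insert_gradients}: testing $(\nabla w,\localQ_\level\lambda-\localQ_{\level-1}\projectionOp_{\level-1}\lambda)_0=0$ with $w=\ureconstructed\in\linElementSpace_{\level-1}$ shows that $\localQ_{\level-1}\projectionOp_{\level-1}\lambda$ and $\localQ_\level\lambda$ have the same projection onto $\nabla\linElementSpace_{\level-1}$, so $\|\localQ_{\level-1}\projectionOp_{\level-1}\lambda+\nabla\ureconstructed\|_0$ can be bounded by $\|\localQ_\level\lambda+\nabla\ureconstructed\|_0$ plus a term orthogonal to $\nabla\ureconstructed$ that is itself $O(h_\level\|A_\level\lambda\|_\level)$ by the stability of $\projectionOp_{\level-1}$ (Lemma~\ref{LEM:projection_stable}) together with Lemma~\ref{LEM:extract_hk_from_U}.

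The main obstacle I anticipate is making precise the link between the abstract quantity $\|A_\level\lambda\|_\level$ and an HDG discretization error of a genuine elliptic problem with the right regularity. One cannot simply assume $\lambda$ solves the HDG system for a nice $f$; instead one must show that the residual data encoded by $A_\level\lambda$ can be treated as a source term $\tilde f$ with $\|\tilde f\|_0\lesssim\|A_\level\lambda\|_\level$ and then apply elliptic regularity to the corresponding continuous problem. This requires care with the HDG local solver's mapping properties (the LDG-H structure and Lemma~\ref{LEM:u_q_bound} are exactly what makes $\vec W_\elem=\polynomials_p^d$ necessary here) and with the scaling $\tau_\level\cong h_\level^{-1}$, which is what keeps all the penalty-weighted boundary terms of order $h_\level$ rather than $O(1)$. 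Once that identification is in place, the remaining steps are standard interpolation and HDG a priori estimates combined with the already-established Lemmas~\ref{LEM:injection_bounded}, \ref{LEM:projection_stable}, \ref{LEM:extract_hk_from_U}, and~\ref{LEM:insert_gradients}.
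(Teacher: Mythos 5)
Your overall architecture for the \emph{first} term is the same as the paper's: represent $A_\level\lambda$ by an $L^2$ function $f_\lambda$ with $\|f_\lambda\|_0\lesssim\|A_\level\lambda\|_\level$, solve the continuous Poisson problem with that source, use elliptic regularity, and compare via HDG a priori estimates and a linear finite element (Ritz/interpolation) approximation. However, you have correctly identified the central difficulty and then not closed it: $\lambda$ is \emph{not} the HDG solution of the auxiliary problem, and the paper must prove $\|\lambda-\tildelambda_\level\|_{a_\level}\lesssim h_\level\|A_\level\lambda\|_\level$, where $\tildelambda_\level$ is defined by $a_\level(\tildelambda_\level,\mu)=(f_\lambda,\localU_\level\mu)_0$ (this is Lemma~\ref{LEM:extract_hk_basis}, a separate duality argument from Tan's thesis relying on the $\heartsuit$-inner product construction). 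Your sentence ``identifying the HDG solution of the auxiliary problem with (the relevant part of) $\lambda$ via the definition of $A_\level$'' is exactly the step that needs a proof, and flagging it as ``the main obstacle'' in the closing paragraph does not supply one.

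The second, more serious gap is your treatment of $\|\localQ_{\level-1}\projectionOp_{\level-1}\lambda+\nabla\ureconstructed\|_0$. Writing $d=\localQ_\level\lambda-\localQ_{\level-1}\projectionOp_{\level-1}\lambda$, your argument reduces to claiming $\|d\|_0\lesssim h_\level\|A_\level\lambda\|_\level$ ``by the stability of $\projectionOp_{\level-1}$ together with Lemma~\ref{LEM:extract_hk_from_U}.'' Neither ingredient delivers this: Lemma~\ref{LEM:projection_stable} only gives $\|d\|_0\lesssim\|\lambda\|_{a_\level}$ with no factor of $h_\level$, and Lemma~\ref{LEM:extract_hk_from_U} concerns the scalar reconstruction $\localU$, not the flux $\localQ$; moreover the bound $\|d\|_0\lesssim h_\level\|A_\level\lambda\|_\level$ is equivalent (by the triangle inequality) to the lemma you are trying to prove, so the step is circular. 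The paper instead introduces the coarse-level HDG solution $\tildelambda_{\level-1}$ of the \emph{same} auxiliary problem and splits $\localQ_{\level-1}\projectionOp_{\level-1}\lambda+\nabla\ureconstructed$ into four pieces; the crucial one, $e_{\level-1}=\projectionOp_{\level-1}\tildelambda_\level-\tildelambda_{\level-1}$, satisfies $a_{\level-1}(e_{\level-1},\mu)=(f_\lambda,\localU_\level\injectionOp_\level\mu-\localU_{\level-1}\mu)_0$, and it is \emph{there} that Lemma~\ref{LEM:extract_hk_from_U} (applied to $e_{\level-1}$, not to $\lambda$) extracts the factor $h_\level$. Note also that the quasi-orthogonality of Lemma~\ref{LEM:insert_gradients} is not used inside the proof of this lemma at all; in the paper it enters later, in the proof of Theorem~\ref{TH:proof_of_A1}, to insert $2\nabla\ureconstructed$ into the term \eqref{EQ:main_proof_T1}.
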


The proof of this lemma is based on an explicit construction of
$\ureconstructed$ based on the techniques
in~\cite{CockburnDGT2013}. It is conducted in the following
subsection.

\subsection{Proof of reconstruction approximation}\label{SEC:proof_reco_approx}

Following~\cite{CockburnDGT2013}, we construct several auxiliary
quantities in order to define $\ureconstructed$ in
Lemma \ref{LEM:extract_hk}. First, we define extension operators
$S_{\elem;i}$ on $\skeletalSpace_\level$ for each cell $\elem$ and each
of its faces $\face_i$ into $\polynomials_{p+1}$ on $\elem$ by the
interpolation conditions
\begin{align}
  \langle S_{\elem;i} \lambda, \eta \rangle_{\face_i}
  & = \langle \lambda, \eta \rangle_{\face_i} && \forall \eta \in \polynomials_{p+1}(\face_i),\\
  ( S_{\elem;i} \lambda, v )_\elem
  & = (\localU \lambda, v)_\elem  && \forall v \in \polynomials_p(\elem).
\end{align}
These are used to define the auxiliary inner product
\begin{gather}
  \langle \lambda, \mu \rangle^\heartsuit_\level
  = \sum_{\elem \in \mesh_\level} \frac{1}{\faceNum_\level} \sum_{i = 1}^{\faceNum_\level} \int_\elem S_{\elem;i} \lambda \; S_{\elem;i} \mu \dx,
\end{gather}
and its corresponding norm $\| \cdot \|_{\heartsuit \level}$. Then, for
$\lambda \in \skeletalSpace_\level$ (which is fixed in this section, while $u$ and $\bar u$ depend on it) let
$\phi_\lambda \in \skeletalSpace_\level$ be defined by
\begin{gather}
  \langle \phi_\lambda, \mu \rangle^\heartsuit_\level = a_\level(\lambda, \mu) = \langle A_\level \lambda, \mu \rangle_\level,
  \qquad \mu \in \skeletalSpace_\level.
\end{gather}
Thus, $\phi_\lambda$ represents $A_\level \lambda$ in
$\skeletalSpace_\level$. Similarly,
$f_\lambda = \localU_\level \phi_\lambda\in \discElementSpace_\level$
represents $A_\level \lambda$ on the whole domain. Based on these representations, we define $\tilde u \in H^1_0(\Omega)$ by
\begin{gather}
  (\nabla \tilde u, \nabla v)_0 = (f_\lambda, v)_0,
  \quad \forall v \in H^1_0(\Omega),
\end{gather}
and $\tildelambda_{\level} \in \skeletalSpace_\level$ by
\begin{gather}
  \label{EQ:tilde_lambda}
  a_\level(\tildelambda_{\level}, \mu) = (f_\lambda, \localU_\level \mu)_0 \qquad \forall \mu \in \skeletalSpace_\level.
\end{gather}
In the remainder of this subsection, we show that
$\ureconstructed=\projectionLinOp_{\level-1} \tildelambda_{\level}$ can be used
in Lemma \ref{LEM:extract_hk}.
Here, the Ritz quasi-projection $\projectionLinOp_{\level-1}$ to the cellwise linear coarse space $ \linElementSpace_{\level-1}$ is defined by
\begin{equation}
 \projectionLinOp_{\level-1} \colon \skeletalSpace_\level \to \linElementSpace_{\level-1}, \qquad 
 (\nabla \projectionLinOp_{\level-1} \lambda, \nabla w)_0 = a_\level(\lambda, \traceOp w) \quad \forall w \in \linElementSpace_{\level-1}.
\end{equation}
We begin with an approximation result for $\tildelambda_{\level}$:

\begin{lemma}\label{LEM:extract_hk_basis}
 If \eqref{EQ:diffusion_mixed} has elliptic regularity, for all $\lambda \in \skeletalSpace_\level$, we have
 \begin{equation}
   \| \lambda - \tildelambda_{\level} \|_{a_\level}
   \lesssim h_\level \| A_\level \lambda \|_\level \qquad \text{and} \qquad \| f_\lambda \|_0 \lesssim \| A_\level \lambda \|_\level. 
 \end{equation}
\end{lemma}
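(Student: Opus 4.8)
The plan is to estimate the two quantities separately, starting with $\|f_\lambda\|_0 \lesssim \|A_\level\lambda\|_\level$. By definition $f_\lambda = \localU_\level \phi_\lambda$ where $\langle \phi_\lambda, \mu\rangle^\heartsuit_\level = \langle A_\level\lambda, \mu\rangle_\level$. First I would show that the auxiliary inner product $\langle\cdot,\cdot\rangle^\heartsuit_\level$ is spectrally equivalent to $\langle\cdot,\cdot\rangle_\level$ on $\skeletalSpace_\level$, with constants depending only on shape regularity; this follows from the scaling of the local extension operators $S_{\elem;i}$ (the face term fixes the boundary trace, the volume term matches $\localU\lambda$ on $\polynomials_p$, and a standard scaling/inverse-estimate argument on the reference cell gives $\|S_{\elem;i}\lambda\|_{L^2(\elem)}^2 \sim h_\elem \|\lambda\|_{L^2(\face_i)}^2 + \text{l.o.t.}$, matching the weights in $\langle\cdot,\cdot\rangle_\level$). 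Taking $\mu = \phi_\lambda$ gives $\|\phi_\lambda\|^2_{\heartsuit\level} = \langle A_\level\lambda,\phi_\lambda\rangle_\level \le \|A_\level\lambda\|_\level \|\phi_\lambda\|_\level \lesssim \|A_\level\lambda\|_\level \|\phi_\lambda\|_{\heartsuit\level}$, hence $\|\phi_\lambda\|_{\heartsuit\level} \lesssim \|A_\level\lambda\|_\level$. Then I bound $\|f_\lambda\|_0 = \|\localU_\level\phi_\lambda\|_0$ by $\|\phi_\lambda\|_{\heartsuit\level}$ (equivalently by $\|\phi_\lambda\|_\level$), using the local solver stability estimate $\|\localU_\level\mu\|_0 \lesssim \tau_\level^{-1/2}\nnorm\mu\nnorm_\level + \dots \lesssim \|\mu\|_\level$ under $\tau_\level \cong h_\level^{-1}$ (this is essentially Lemma~\ref{LEM:u_q_bound}-type content combined with the norm equivalences already in the paper).

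For the first estimate, $\|\lambda - \tildelambda_\level\|_{a_\level} \lesssim h_\level\|A_\level\lambda\|_\level$, I would test the defining relation of $\tildelambda_\level$, namely $a_\level(\tildelambda_\level,\mu) = (f_\lambda, \localU_\level\mu)_0$, against $\mu = \lambda - \tildelambda_\level$, so that
\begin{equation}
  \|\lambda - \tildelambda_\level\|^2_{a_\level}
  = a_\level(\lambda, \lambda - \tildelambda_\level) - (f_\lambda, \localU_\level(\lambda - \tildelambda_\level))_0.
\end{equation}
Now $a_\level(\lambda,\mu) = \langle A_\level\lambda,\mu\rangle_\level$, and the key is to relate $\langle A_\level\lambda,\mu\rangle_\level$ to $(f_\lambda,\localU_\level\mu)_0$. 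Since $\tilde u$ solves the Poisson problem with right-hand side $f_\lambda$ and since $f_\lambda = \localU_\level\phi_\lambda$ represents $A_\level\lambda$ via the local solver, I expect the combination $a_\level(\lambda,\mu) - (f_\lambda,\localU_\level\mu)_0$ to be controlled through a duality/Aubin--Nitsche argument: introduce $\tilde u \in H^1_0(\Omega)$, use elliptic regularity $|\tilde u|_{H^2} \lesssim \|f_\lambda\|_0$, compare $\localU_\level\mu$ against $\tilde u$ or its projection, and pick up one power of $h_\level$ from the approximation error. Concretely I would insert the HDG consistency identity for $a_\level(\lambda, \cdot)$ (the condensed form applied with exact data $f_\lambda$) and bound the residual using the reconstruction/approximation properties of $\localU_\level$, obtaining a factor $h_\level \|f_\lambda\|_0 \|\mu\|_{a_\level}$, then divide by $\|\lambda-\tildelambda_\level\|_{a_\level}$ and combine with the already-established $\|f_\lambda\|_0 \lesssim \|A_\level\lambda\|_\level$.

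The main obstacle is the duality step: making precise the cancellation between $a_\level(\lambda,\mu)$ and $(f_\lambda,\localU_\level\mu)_0$ and extracting the full power of $h_\level$. This is where elliptic regularity is genuinely used, and it requires care because $a_\level$ is a nonconforming (HDG) form whose local solver $\localU_\level$ does not reproduce $H^2$ functions exactly; one must quantify $\|\localU_\level\mu - (\text{something built from }\tilde u)\|$ and the associated flux defect $\|\localQ_\level\mu + \nabla\tilde u\|$ in terms of $h_\level$. I would lean on the HDG error-analysis machinery of \cite{CockburnDGT2013} (Thm.~3.2 and the projection estimates there) rather than re-derive it, treating the standard HDG a priori bounds as black boxes. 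The rest — the $\heartsuit$-norm equivalence, the local solver stability, and the Cauchy--Schwarz bookkeeping — is routine scaling.
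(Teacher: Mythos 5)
The first thing to note is that the paper does not actually prove this lemma: its entire ``proof'' is the citation \cite[Lemma~5.10]{TanPhD}. So your proposal is not being measured against an in-paper argument but against the thesis proof it reconstructs. Your treatment of the second estimate, $\| f_\lambda \|_0 \lesssim \| A_\level \lambda \|_\level$, is essentially right and can even be streamlined: since $(S_{\elem;i}\phi_\lambda, v)_\elem = (\localU_\level\phi_\lambda, v)_\elem$ for all $v \in \polynomials_p(\elem)$, taking $v = \localU_\level\phi_\lambda$ gives $\|\localU_\level\phi_\lambda\|_{L^2(\elem)} \le \|S_{\elem;i}\phi_\lambda\|_{L^2(\elem)}$ for every $i$, hence $\|f_\lambda\|_0 \le \|\phi_\lambda\|_{\heartsuit\level}$ with no separate local-solver stability argument; the only nontrivial ingredient is then the one-sided equivalence $\|\phi_\lambda\|_\level \lesssim \|\phi_\lambda\|_{\heartsuit\level}$, which is exactly the trace/scaling estimate you describe. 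That half is fine.

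The gap is in the first estimate, and it sits precisely where the lemma's content is. After testing with $\mu = \lambda - \tildelambda_\level$ you must bound $a_\level(\lambda,\mu) - (f_\lambda, \localU_\level\mu)_0$ by $h_\level \|A_\level\lambda\|_\level\,\|\mu\|_{a_\level}$, and your proposal replaces this step by the expectation that ``a duality/Aubin--Nitsche argument'' will produce the power of $h_\level$, with the standard HDG a priori bounds taken as black boxes. That is the entire lemma deferred, and the duality framing is not obviously the right mechanism here: by the definitions of $\phi_\lambda$ and $f_\lambda$ this difference equals $\langle \phi_\lambda, \mu\rangle^\heartsuit_\level - (\localU_\level\phi_\lambda, \localU_\level\mu)_0$, and since $\localU_\level\phi_\lambda$ and $\localU_\level\mu$ are exactly the $L^2(\elem)$-orthogonal projections $\Pi^\elem_p S_{\elem;i}\phi_\lambda$ and $\Pi^\elem_p S_{\elem;i}\mu$ onto $\polynomials_p(\elem)$, the cross terms cancel and one is left with the purely local expression
\begin{equation*}
 \sum_{\elem\in\mesh_\level}\frac{1}{\faceNum_\level}\sum_{i}\bigl((I-\Pi^\elem_p)S_{\elem;i}\phi_\lambda,\,(I-\Pi^\elem_p)S_{\elem;i}\mu\bigr)_\elem .
\end{equation*}
The factor $h_\level$ must then be extracted from these degree-$(p+1)$ remainders by scaling estimates that play them off against the stabilization part of $\|\mu\|_{a_\level}$ (and against $\|\phi_\lambda\|_\level$), using $\tau_\level \cong h_\level^{-1}$ as in the surrounding Theorem~\ref{TH:proof_of_A1}; no global dual problem for $\tilde u$ is forced on you by this identity. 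The auxiliary function $\tilde u$ and elliptic regularity do real work in the paper only later, for the terms $\Xi_3$ and $\Xi_4$ in the proof of Lemma~\ref{LEM:extract_hk}. So either carry out the local orthogonality-and-scaling argument above, or, if you insist on the duality route, you must actually exhibit the consistency identity you invoke and show where the cancellation occurs --- as written, the crucial $O(h_\level)$ gain is asserted rather than proved.
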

\begin{proof}
 This is \cite[Lemma~5.10]{TanPhD}.
\end{proof}

We now prove the estimate for the second norm in Lemma \ref{LEM:extract_hk}.
To this end, we introduce the auxiliary function $\tildelambda_{\level -1} \in \skeletalSpace_{\level-1}$, which is defined like $\tildelambda_{\level}$ in equation~\eqref{EQ:tilde_lambda}, but on level $\level-1$. Then,
\begin{multline}
  \localQ_{\level-1} \projectionOp_{\level-1} \lambda
  + \nabla \projectionLinOp_{\level-1} \tildelambda_{\level}\\
  =  \underbrace{ \localQ_{\level-1} \projectionOp_{\level-1} \lambda - \localQ_{\level-1} \projectionOp_{\level-1} \tildelambda_{\level} }_{ =: \Xi_1 }
  + \underbrace{ \localQ_{\level-1} \projectionOp_{\level-1} \tildelambda_{\level} - \localQ_{\level-1} \tildelambda_{\level -1} }_{ =: \Xi_2 }
  \label{EQ:proof_split_equation}\\
  + \underbrace{ \localQ_{\level-1} \tildelambda_{\level -1} + \nabla \tilde u }_{ =: \Xi_3}
  + \underbrace{ ( - \nabla \tilde u + \nabla \projectionLinOp_{\level-1} \tildelambda_{\level}) }_{ =: \Xi_4 }.
\end{multline}
 To obtain the result, we now have to bound all four norms $\| \Xi_1 \|_0$ to $\| \Xi_4 \|_0$. First,
 \begin{equation}
  \| \Xi_1 \|_0 \le \| \projectionOp_{\level-1} \lambda - \projectionOp_{\level-1} \tildelambda_{\level} \|_{a_{\level-1}} \lesssim \| \lambda - \tildelambda_{\level} \|_{a_\level} \lesssim h_\level \| A_\level \lambda \|_\level,
 \end{equation}
 where the first inequality follows directly from the definition of $\| \cdot \|_{a_{\level-1}}$, the second inequality is Lemma \ref{LEM:projection_stable}, and the last inequality is Lemma \ref{LEM:extract_hk_basis}. Using the definition of $\tildelambda_{\level -1}$, \eqref{EQ:projection_definition}, and \eqref{EQ:tilde_lambda}, we obtain for $e_{\level-1} := \projectionOp_{\level-1} \tildelambda_{\level} - \tildelambda_{\level -1}$ that
 \begin{equation}
  a_{\level-1} (e_{\level-1} , \mu) = (f_\lambda , \localU_\level \injectionOp_\level \mu - \localU_{\level-1} \mu)_0 \qquad \forall \mu \in \skeletalSpace_{\level-1}.
 \end{equation}
 Choosing $\mu = e_{\level-1}$, reusing that $\| \localQ_{\level-1} \cdot \|_0 \le \| \cdot \|_{a_{\level-1}}$ and exploiting the Cauchy--Schwarz inequality, we get
 \begin{align}\label{EQ:proof_Xi_2}
  \| \Xi_2 \|^2_0 & \le a_\level (e_{\level-1},e_{\level-1}) = (f_\lambda, \localU_\level \injectionOp_\level e_{\level-1} - \localU_{\level-1} e_{\level-1})_0 \\
  & \le \| f_\lambda \|_0 \| \localU_{\level-1} e_{\level-1} - \localU_\level \injectionOp_\level e_{\level-1}  \|_0.
 \end{align}
 The correct bound for $\| \Xi_2 \|_0$ can now be deduced, since Lemma \ref{LEM:extract_hk_from_U} implies that
 \begin{equation}
  \| \localU_{\level-1} e_{\level-1} - \localU_\level \injectionOp_\level e_{\level-1}  \|_0 \lesssim h_\level \| e_{\level-1} \|_{a_{\level-1}}
 \end{equation}
 which allows to extract
 \begin{equation}
  \| e_{\level-1} \|^2_{a_{\level-1}} \lesssim h_\level \| f_\lambda \|_0 \| e_{\level-1} \|_{a_{\level-1}}
 \end{equation}
 from \eqref{EQ:proof_Xi_2}. Dividing this inequality by $\| e_{\level-1} \|_{a_{\level-1}}$ yields 
 \begin{equation}
  \| \Xi_2 \|^2_0 \le \| e_{\level-1} \|^2_{a_{\level-1}} \lesssim h^2_\level \| f_\lambda \|^2_0 \lesssim h^2_\level \| A_\level \lambda \|^2_\level,
 \end{equation}
 where the last inequality follows from Lemma \ref{LEM:extract_hk_basis}. For $\| \Xi_3 \|_0$, we utilize the convergence properties of the HDG method and that $\localQ_{\level-1} \tildelambda_{\level -1} + \localQ_{\level-1} f_\lambda$ approximates $\tilde{\vec q} = - \nabla \tilde u$. Hence, we can deduce that
 \begin{subequations}\label{EQ:proof_for_Xi_3}
 \begin{equation}\label{EQ:xi3_approx_property}
  \| \localQ_{\level-1} \tildelambda_{\level -1} + \localQ_{\level-1} f_\lambda + \nabla \tilde u \|_0 \lesssim h_{\level-1} | \tilde u |_{H^2(\Omega)} \lesssim h_{\level-1} \| f_\lambda \|_0 \lesssim h_{\level} \| A_\level \lambda \|_\level.
 \end{equation}
 Here, the elliptic regularity of \eqref{EQ:diffusion_mixed} enters together with Lemma \ref{LEM:extract_hk_basis} --- which is also needed to deduce
 \begin{equation}
  \| \localQ_{\level-1} f_\lambda \|_0 \lesssim h_{\level-1} \| f_\lambda \|_0 \lesssim h_\level \| A_\level \lambda \|_\level,
 \end{equation}
 \end{subequations}
 where the first inequality is \cite[Lem.~3.7]{ChenLX2014}. \eqref{EQ:proof_for_Xi_3} implies the desired properties for $\| \Xi_3 \|_0$. For the last term, we observe that
 \begin{equation}
  (\nabla \projectionLinOp_{\level-1} \tildelambda_{\level}, \nabla w)_0 = a_\level(\tildelambda_{\level}, \traceOp w) = ( f_\lambda, \localU_\level \traceOp w )_0 = ( f_\lambda, w )_0
 \end{equation}
 for all $w \in \linElementSpace_{\level-1}$. Thus, we can exploit the  approximation property of (continuous) linear finite elements to obtain the result for $\| \Xi_4 \|_0$ similar to \eqref{EQ:xi3_approx_property}.
 
 The inequality for the first term can be obtained analogously substituting \eqref{EQ:proof_split_equation} by
 \begin{equation}
  \localQ_\level  \lambda + \nabla \projectionLinOp_{\level-1} \tildelambda_{\level} = \localQ_\level \lambda - \localQ_\level \tildelambda_{\level} + \localQ_\level \tildelambda_{\level} + \nabla \tilde u - \nabla \tilde u + \nabla \projectionLinOp_{\level-1} \tildelambda_{\level}.
 \end{equation}
Additionally, we use similar techniques as above to prove the following lemma which will be used in the proof of the main convergence result.
\begin{lemma}\label{LEM:u_q_bound}
 If for all $\level$, $\tau_\level h_\level \le c$ for some $c > 0$, we have for all $\lambda \in \skeletalSpace_\level$, $\mu_\level \in \skeletalSpace_\level$, and $\mu_{\level+1} \in \skeletalSpace_{\level+1}$ that
 \begin{gather}
   \begin{split}
     \tau_\level \nnorm \localU_\level \lambda - \lambda \nnorm^2_\level
     &\lesssim \| \localQ_\level \lambda + \nabla \projectionLinOp_{\level-1} \mu_\level \|^2_0 \\
     \tau_\level \nnorm \localU_\level \lambda - \lambda \nnorm^2_\level
     &\lesssim \| \localQ_\level \lambda + \nabla \projectionLinOp_\level \mu_{\level+1} \|^2_0.     
   \end{split}
 \end{gather}
\end{lemma}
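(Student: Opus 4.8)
The plan is to reduce both estimates to a single local inequality on each cell $\elem$, relating the jump term $\tau_\level \| \localU_\level \lambda - \lambda \|^2_{\partial \elem}$ to the $L^2$-norm of $\localQ_\level \lambda$ plus the gradient of an arbitrary element of the \emph{linear} continuous space. The crucial observation is that for $w \in \linElementSpace_{\level-1}$ (resp.\ $\linElementSpace_\level$), $\nabla w$ is piecewise \emph{constant}, so $\nabla \projectionLinOp_{\level-1} \mu_\level$ on a cell $\elem \in \mesh_\level$ is a single vector $\vec c_\elem \in \mathbb R^d$, and — since $\elem$ sits inside one coarse cell — the same $\vec c_\elem$ is shared by all fine cells inside that coarse cell. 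First I would recall the LDG-H local solver identities \eqref{EQ:hdg_primary}–\eqref{EQ:hdg_flux}: testing \eqref{EQ:hdg_primary} with $\vec p_\elem = \localQ_\level \lambda$ and \eqref{EQ:hdg_flux} with $v_\elem = \localU_\level \lambda$ and adding gives the standard energy identity
\begin{equation}
  \| \localQ_\level \lambda \|^2_{0,\elem} + \tau_\level \| \localU_\level \lambda - \lambda \|^2_{\partial \elem}
  = - \llangle \lambda, \localQ_\level \lambda \cdot \Nu \rrangle_{\partial \elem} + \tau_\level \llangle \lambda, \localU_\level \lambda - \lambda \rrangle_{\partial\elem} + \ldots,
\end{equation}
which can be rearranged into $\| \localQ_\level \lambda \|^2_{0,\elem} + \tau_\level \| \localU_\level \lambda - \lambda \|^2_{\partial\elem} = a_{\elem}(\lambda,\lambda)$, the cellwise contribution to $a_\level(\lambda,\lambda)$.

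Next I would exploit that the bilinear form (and hence each jump term) is invariant under adding a constant to $\lambda$ on a cell, because the local solver reproduces constants: if $\lambda \equiv \text{const}$ on $\partial\elem$ then $\localU_\level \lambda = \lambda$ on $\elem$ and $\localQ_\level \lambda = 0$. More usefully, the local solver reproduces affine functions in the following sense — if $\lambda = \traceOp(\vec c \cdot \vec x + b)$ then $\localU_\level \lambda = \vec c\cdot\vec x + b$ and $\localQ_\level \lambda = -\vec c$, so $\localU_\level \lambda - \lambda = 0$ on $\partial\elem$. Writing $\vec c_\elem = \nabla \projectionLinOp_{\level-1}\mu_\level|_\elem$ and $\mu := \lambda - \traceOp(\vec c_\elem\cdot\vec x)$ (the shift is affine on $\elem$), linearity of the local solver gives $\localU_\level \mu - \mu = \localU_\level\lambda - \lambda$ on $\partial\elem$ and $\localQ_\level\mu = \localQ_\level\lambda + \vec c_\elem = \localQ_\level\lambda + \nabla\projectionLinOp_{\level-1}\mu_\level$ on $\elem$. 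Applying the cell energy identity to $\mu$ and discarding the nonnegative $\tau_\level\|\localU_\level\mu - \mu\|^2_{\partial\elem}$ on the left does \emph{not} immediately help; instead I use the reverse: a local \emph{inf-sup} / norm-equivalence bound
\begin{equation}
  \tau_\level \| \localU_\level \mu - \mu \|^2_{\partial\elem} \lesssim \| \localQ_\level \mu \|^2_{0,\elem},
\end{equation}
valid precisely because $\vec W_\elem = \polynomials_p^d$ (this is the LDG-H specificity flagged in the text before Lemma~\ref{LEM:u_q_bound} and where other flux spaces would need modification). This local estimate follows by a standard scaling/Bramble–Hilbert argument on the reference cell together with $\tau_\level h_\level \le c$: on the reference element the finite-dimensional map $\mu \mapsto (\localU\mu - \mu)|_{\partial\hat\elem}$ has kernel containing $\polynomials_1$, and one checks it is controlled by $\|\localQ\mu\|_{0,\hat\elem}$ after quotienting out that kernel; the hypothesis $\tau_\level h_\level \le c$ absorbs the penalty scaling. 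Summing over $\elem\in\mesh_\level$ and using $\nnorm\cdot\nnorm^2_\level = \sum_\elem \|\cdot\|^2_{\partial\elem}$ yields the first inequality; the second is identical with $\mesh_{\level+1}$ replaced conceptually by taking $\vec c_\elem = \nabla\projectionLinOp_\level\mu_{\level+1}|_\elem$, which is again constant on each cell of $\mesh_\level$ since $\projectionLinOp_\level$ lands in $\linElementSpace_\level$.

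I expect the main obstacle to be the local norm equivalence $\tau_\level\|\localU_\level\mu-\mu\|^2_{\partial\elem} \lesssim \|\localQ_\level\mu\|^2_{0,\elem}$: one must verify that the only functions killed by the right-hand side (i.e.\ with $\localQ_\level\mu = 0$) are those with $\localU_\level\mu = \mu$ on $\partial\elem$ — which is exactly where $\vec W_\elem = \polynomials_p^d$ enters, since $\localQ_\level\mu=0$ combined with \eqref{EQ:hdg_primary} forces $\localU_\level\mu$ to have zero gradient and match $\mu$ in the appropriate boundary sense — and then run the compactness/scaling argument uniformly in $\level$, carefully tracking the $\tau_\level$ and $h_\level$ powers so that $\tau_\level h_\level \le c$ is the only structural assumption used. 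The affine-reproduction property of the LDG-H local solver, which makes the reduction to $\mu$ possible, should be stated and proved as a short preliminary step; everything else is bookkeeping with the cellwise decomposition of $\nnorm\cdot\nnorm_\level$ and of $a_\level$.
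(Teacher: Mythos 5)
Your proposal follows essentially the same route as the paper: subtract the trace of the piecewise linear function (which the LDG-H local solver reproduces exactly, leaving the jump term invariant and shifting $\localQ_\level\lambda$ by $\nabla\projectionLinOp_{\level-1}\mu_\level$), then invoke the local estimate $\tau_\level\nnorm\localU_\level\mu-\mu\nnorm^2_\level\lesssim\|\localQ_\level\mu\|^2_0$ under $\tau_\level h_\level\le c$. The only difference is that the paper simply cites \cite[p.~68]{TanPhD} for that local estimate rather than re-deriving it by the scaling/kernel argument you sketch, so your answer is correct in substance and identical in strategy.
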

\begin{proof}
 Obviously, we have $\localU_\level \gamma_\level \projectionLinOp_{\level-1} \mu_\level = \projectionLinOp_{\level-1} \mu_\level$ and $\localQ_\level \gamma_\level \projectionLinOp_{\level-1} \mu_\level = - \nabla \projectionLinOp_{\level-1} \mu_\level$. Thus
 \begin{multline}
  \tau_\level \nnorm \localU_\level \lambda - \lambda \nnorm^2_\level = \tau_\level \nnorm \localU_\level (\lambda - \gamma_\level \projectionLinOp \mu_\level) - (\lambda - \gamma_\level \projectionLinOp_{\level-1} \mu_\level) \nnorm^2_\level\\
  \lesssim \| \localQ_\level (\lambda - \gamma_\level \projectionLinOp_{\level-1} \mu_\level) \|^2_0 = \| \localQ_\level \lambda + \nabla \projectionLinOp_{\level-1} \mu_\level \|^2_0,
 \end{multline}
 where the inequality is \cite[p.~68]{TanPhD}. The second inequality can be obtained analogously.
\end{proof}
%
\subsection{Proof of regularity approximation}\label{SEC:proofs_auxiliaries}
%
With these preliminaries given, we can now state the proof of
Theorem~\ref{TH:proof_of_A1}.  
At first, we recognize that it suffices to show that
 \begin{equation}
  \left| a_\level(\lambda - \injectionOp_\level \projectionOp_{\level-1} \lambda, \lambda) \right| \lesssim h^2_\level \| A_\level \lambda \|^2_\level,
 \end{equation}
 since this is sufficient for \eqref{EQ:precond1} to hold (cf. \cite[Thm.~3.6]{TanPhD}). Using the bilinearity of $a_\level$ and \eqref{EQ:projection_definition}, we immediately obtain
 \begin{align}
  a_\level((\lambda - \injectionOp_\level& \projectionOp_{\level-1} \lambda, \lambda) = a_\level(\lambda, \lambda) - a_{\level-1}(\projectionOp_{\level-1} \lambda, \projectionOp_{\level-1} \lambda) \notag\\
  = & (\localQ_\level \lambda, \localQ_\level \lambda)_0 - (\localQ_{\level-1} \projectionOp_{\level-1} \lambda, \localQ_{\level-1} \projectionOp_{\level-1} \lambda)_0 \tag{T1} \label{EQ:main_proof_T1}\\
  & + \tau_\level \nnorm \localU_\level \lambda - \lambda \nnorm^2_\level - \tau_{\level-1} \nnorm \localU_{\level-1} \projectionOp_{\level-1} \lambda - \projectionOp_{\level-1} \lambda \nnorm^2_{\level-1}, \tag{T2} \label{EQ:main_proof_T2}
 \end{align}
 where the second equation is due to the definition of the bilinear
 forms $a_\level$ and $a_{\level-1}$. Now, we estimate both terms
 \eqref{EQ:main_proof_T1} and \eqref{EQ:main_proof_T2} separately. For
 the first, we use binomial factoring and quasi-orthogonality to
 obtain
 \begin{align}
   \eqref{EQ:main_proof_T1}
  = & (\localQ_\level \lambda + \localQ_{\level-1} \projectionOp_{\level-1} \lambda, \localQ_\level \lambda - \localQ_{\level-1} \projectionOp_{\level-1} \lambda)_0\\
   = & (\localQ_\level \lambda + 2 \nabla \ureconstructed + \localQ_{\level-1} \projectionOp_{\level-1} \lambda, \localQ_\level \lambda - \localQ_{\level-1} \projectionOp_{\level-1} \lambda)_0.
 \end{align}
 Here, $\ureconstructed$ is from Lemma \ref{LEM:extract_hk}.
 Thus,
 \begin{gather}
   \eqref{EQ:main_proof_T1}
   \le \Bigl( \| \localQ_\level \lambda + \nabla \ureconstructed \|_0 + \| \nabla \ureconstructed + \localQ_{\level-1} \projectionOp_{\level-1} \lambda \|_0 \Bigr)
   \; \| \localQ_\level \lambda - \localQ_{\level-1} \projectionOp_{\level-1} \lambda \|_0.
 \end{gather}
 Due to Lemma \ref{LEM:extract_hk}, we can further estimate
 \begin{equation}
   \eqref{EQ:main_proof_T1}\lesssim h_\level \| A_\level \lambda \|_\level
   \Bigl( \| \localQ_\level \lambda + \nabla \ureconstructed \|_0 + \| \nabla \ureconstructed + \localQ_{\level-1} \projectionOp_{\level-1} \lambda \|_0 \Bigr)
 \end{equation}
 Application of Lemma \ref{LEM:extract_hk} gives the desired result for \eqref{EQ:main_proof_T1}.
 
 For \eqref{EQ:main_proof_T2}, we exploit that both summands have exactly the same form and can be treated analogously. Thus, we only demonstrate the procedure for the second summand:
 \begin{equation}
  \tau_{\level-1} \| \localU_{\level-1} \projectionOp_{\level-1} \lambda - \projectionOp_{\level-1} \lambda \|^2_{\level-1} \lesssim \| \localQ_{\level-1} \projectionOp_{\level-1} \lambda + \nabla \ureconstructed \|^2_0 \lesssim h^2_\level \| A_\level \lambda \|^2_\level,
 \end{equation}
 where the first inequality is Lemma \ref{LEM:u_q_bound} and the second inequality is Lemma \ref{LEM:extract_hk}.
%
\section{Proof of \eqref{EQ:precond2} and \eqref{EQ:precond3}}\label{SEC:proof_a2_a3}
%
The proof of \eqref{EQ:precond2} is a simple consequence of Lemma \ref{LEM:injection_bounded} with $\projectionOp_{\level-1} \lambda$ instead of $\lambda$ i.e., we have
\begin{align}
 a_\level ( \lambda - & \injectionOp_\level \projectionOp_{\level-1} \lambda, \lambda - \injectionOp_\level \projectionOp_{\level-1} \lambda ) \\
 = & a_\level ( \lambda, \lambda ) - 2 a_\level ( \lambda, \injectionOp_\level \projectionOp_{\level-1} \lambda ) + a_\level ( \injectionOp_\level \projectionOp_{\level-1} \lambda, \injectionOp_\level \projectionOp_{\level-1} \lambda ) \\
 \le & a_\level ( \lambda, \lambda ) \underbrace{ - 2 a_{\level - 1} ( \projectionOp_{\level-1} \lambda,  \projectionOp_{\level-1} \lambda ) }_{ \le 0 } + C \underbrace{ a_{\level-1} ( \projectionOp_{\level-1} \lambda, \projectionOp_{\level-1} \lambda ) }_{ \lesssim \| \lambda \|^2_{a_\level} \; \text{by Lemma \ref{LEM:projection_stable}} },
\end{align}

For the proof of \eqref{EQ:precond3}, we heavily rely on \cite{BrambleP1992} (where \eqref{EQ:precond3} is denoted (2.11)). Theorems 3.1 and 3.2 of \cite{BrambleP1992} ensure that \eqref{EQ:precond3} holds if the subspaces satisfy a ``limited interaction property'' which holds, because each degree of freedom (DoF) only ``communicates'' with other DoFs which are located on the same face as the DoF or on the other faces of the two adjacent elements.

%
\section{Numerical experiments}\label{SEC:numerics}
%
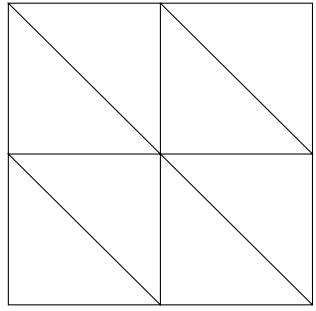
\begin{figure}[t!]
 \begin{tikzpicture}[scale = 2.]
  \draw (0,2) -- (0,0) -- (1,0) -- (1,1) -- (0,1) -- (1,0) -- (2,0) -- (2,2) -- (1,2) -- (1,1) -- (2,1) -- (1,2) -- (0,2) -- (2,0);
 \end{tikzpicture}
 \caption{Coarse grid (level 0) for numerical experiments. Meshes on higher levels are generated by uniform refinement.}\label{FIG:mesh}
\end{figure}
For the numerical evaluation of our multigrid method for HDG, we consider the following Poisson problem on the unit square $\Omega = (0,1)^2$:
\begin{subequations}\label{EQ:num_testcase}
\begin{align}
 -\Delta u & = f && \text{ in } \Omega,\\
 u & = 0 && \text{ on } \partial \Omega,
\end{align}
\end{subequations}
where $f$ is chosen as one. The first mesh is shown in Figure \ref{FIG:mesh} and it is successively refined in our experiments. The implementation is based on the FFW toolbox from \cite{BGGRW07} and employs the Gauss--Seidel smoother. It uses a Lagrange basis and the Euclidean inner product in the coefficient space instead of the inner product $\langle .,. \rangle_\level$. These two inner products are equivalent up to a factor of $h^2_\level$.
Supposing that the matrix form of \eqref{EQ:num_testcase} is $\mathbf A \vec x = \vec b$ we stop the iteration for solving the linear system of equations if
\begin{equation}
 \frac{ \| \vec b - \mathbf A \vec x_\text{iter} \|_2 }{ \| \vec b \|_2 } < 10^{-6}.
\end{equation}
The initial value $\vec x$ on mesh level $\ell$ is the solution on level $\ell-1$, thus we perform a nexted iteration.
The numbers of iteration steps needed are shown in
Table~\ref{TAB:multigrid_steps_f1} for one and two pre- and post-smoothing steps on each level, respectively. Clearly, these numbers
are independent of the mesh level, as predicted by our
analysis. Additionally, the numbers are fairly small, such that we can
conclude that we actually have an efficient method. If we employ two
smoothing steps instead of one, the number of steps is almost divided
by two, such that both options will result in similar numerical
effort. Finally, we see that the choice of
$\tau \in \{ \tfrac{1}{h}, 1
\}$ does not significantly influence the number of iterations. We ran
experiments for polynomial degrees one, two, and three, where
iteration counts remain well bounded; nevertheless, we expect rising
counts for higher degrees, as we use a point smoother.

\begin{table}
 \begin{tabular}{cc|cccccc|cccccc}
  \toprule
  \multicolumn{2}{c|}{smoother}    & \multicolumn{6}{c|}{one step} & \multicolumn{6}{c}{two steps} \\
  \midrule
  \multicolumn{2}{c|}{mesh level}  & 1 & 2 & 3 & 4 & 5 & 6 & 1 & 2 & 3 & 4 & 5 & 6 \\
  \midrule
  \multirow{2}{*}{\rotatebox[origin=c]{90}{$p = 1$}}
  & $\tau = \tfrac{1}{h}$ & 33 & 39 & 38 & 36 & 35 & 35 & 17 & 20 & 19 & 19 & 18 & 18 \\
  & $\tau = 1$            & 33 & 39 & 36 & 35 & 34 & 33 & 17 & 19 & 18 & 18 & 17 & 17\\
  \midrule
  \multirow{2}{*}{\rotatebox[origin=c]{90}{$p = 2$}}
  & $\tau = \tfrac{1}{h}$ & 13 & 12 & 11 & 10 & 10 & 09 & 08 & 07 & 07 & 06 & 06 & 05 \\
  & $\tau = 1$            & 13 & 12 & 11 & 10 & 10 & 09 & 08 & 07 & 07 & 06 & 06 & 05 \\
  \midrule
  \multirow{2}{*}{\rotatebox[origin=c]{90}{$p = 3$}}
  & $\tau = \tfrac{1}{h}$ & 24 & 25 & 25 & 25 & 25 & 25 & 15 & 15 & 15 & 15 & 15 & 15 \\
  & $\tau = 1$            & 24 & 25 & 25 & 25 & 25 & 25 & 15 & 15 & 15 & 15 & 15 & 15 \\
  \bottomrule
 \end{tabular}\vspace{1ex}
 \caption{Numbers of iterations with one and two smoothing steps for $f \equiv 1$. The polynomial degree of the HDG method is $p$.}\label{TAB:multigrid_steps_f1}
\end{table}
%

Additionally, we tested the correctness of our implementation by
employing a right hand side leading to the solution
$u = \sin(2 \pi x) \sin(2 \pi y)$. The estimated orders of convergence
(EOC) of the primary unknown $u$ computed as
\begin{equation}
 \text{EOC} = \log \left( \frac{ \| u - u_{\level-1} \|_{L^2(\Omega)} }{ \| u - u_\level \|_{L^2(\Omega)} } \right) / \log (2),
\end{equation}
and the secondary unknown $\vec q$ of the HDG method are reported in
Table~\ref{TAB:hdg_eoc}; they coincide well with the orders predicted
in~\cite{CockburnGS2010}.  Iteration counts are almost
identical to those in Table~\ref{TAB:multigrid_steps_f1}, such that we
do not report them here.  We see that the choice $\tau = \tfrac{1}{h}$
is suboptimal as compared to $\tau = 1$, as the error in the secondary
unknown $\vec q$ converges slower by one order. This is why we
included results for $\tau=1$ in Table~\ref{TAB:multigrid_steps_f1}
albeit a theoretical justification is still missing.
\begin{table}
 \begin{tabular}{cc|@{\,}lcc@{\,}lcc@{\,}lcc@{\,}lcc@{\,}lcc@{\,}lcc}
  \toprule
  \multicolumn{2}{c|@{\,}}{mesh}                  && \multicolumn{2}{c}{2}  && \multicolumn{2}{c}{3}   && \multicolumn{2}{c}{4}   && \multicolumn{2}{c}{5}    && \multicolumn{2}{c}{6}     && \multicolumn{2}{c}{7}     \\
  \cmidrule{4-5} \cmidrule{7-8} \cmidrule{10-11} \cmidrule{13-14} \cmidrule{16-17} \cmidrule{19-20}
  \multicolumn{2}{c|@{\,}}{EOC}                   && $u$  & $\vec q$  && $u$  & $\vec q$  && $u$  & $\vec q$  && $u$  & $\vec q$  && $u$  & $\vec q$  && $u$  & $\vec q$  \\
  \midrule
  \multirow{2}{*}{\rotatebox[origin=c]{90}{$p = 1$}}
  & $\tau = \tfrac{1}{h}$ && 1.4 & 1.2 && 1.9 & 1.8 && 2.0 & 1.7 && 2.0 & 1.4 && 2.0 & 1.2 && 2.0 & 1.0 \\
  & $\tau = 1$            && 1.5 & 1.3 && 2.0 & 1.9 && 2.0 & 2.0 && 2.0 & 2.0 && 2.0 & 2.0 && 2.0 & 2.0 \\
  \midrule
  \multirow{2}{*}{\rotatebox[origin=c]{90}{$p = 2$}}
  & $\tau = \tfrac{1}{h}$ && 3.4 & 3.0 && 3.1 & 2.8 && 3.0 & 2.6 && 3.0 & 2.4 && 3.0 & 2.1 && 3.0 & 2.0 \\
  & $\tau = 1$            && 3.4 & 3.1 && 3.1 & 2.9 && 3.0 & 3.0 && 3.0 & 3.0 && 3.0 & 3.0 && 3.0 & 3.0 \\
  \midrule
  \multirow{2}{*}{\rotatebox[origin=c]{90}{$p = 3$}}
  & $\tau = \tfrac{1}{h}$ && 3.9 & 2.8 && 4.4 & 3.7 && 4.2 & 3.5 && 4.1 & 3.2 && 4.0 & 3.1 && 4.0 & 3.0 \\
  & $\tau = 1$            && 2.8 & 2.8 && 3.9 & 3.9 && 4.0 & 4.0 && 4.0 & 4.0 && 4.0 & 4.0 && 4.0 & 4.0 \\
  \bottomrule
 \end{tabular}\vspace{1ex}
 \caption{Estimated orders of convergence (EOC) for primary unknown $u$ and secondary unknown $\vec q$ when the polynomial degree of the HDG method is $p$ and the exact solution is $u = \sin(2 \pi x) \sin(2 \pi y)$.}\label{TAB:hdg_eoc}
\end{table}
%
\section{Conclusions}
%
We proposed a homogeneous multigrid method for HDG. We proved analytically that this method converges independently of the mesh size. Numerical examples have shown that the condition numbers are not only independent of the mesh size but also reasonably small. As as consequence, we have been enabled to efficiently solve linear systems of equations arising from HDG discretizations of arbitrary order. Our proofs apply to stabilization terms $\tau \sim h^{-1}$, but numerical experiments suggest optimal convergenca also for $\tau \sim 1$.
\bibliographystyle{amsplain}
\bibliography{MultigridHDG}
\end{document}